\DeclareMathAlphabet{\mathpzc}{OT1}{pzc}{m}{it}
\newtheorem{theorem}{Theorem}[section]
\newtheorem{lemma}[theorem]{Lemma}
\theoremstyle{definition}
\newtheorem{definition}[theorem]{Definition}
\newtheorem{example}[theorem]{Example}
\newtheorem{remark}[theorem]{Remark}
\newtheorem*{assA*}{Assumption A}
\numberwithin{equation}{section}
\newcommand{\dif}{\mathrm{d}}
\begin{document}

\title[Stable]
{Stable Approximation for Call Function Via Stein's method}

\author[P.~Chen]{Peng Chen}
\address[P.~Chen]{School of Mathematics, Nanjing University of Aeronautics and
Astronautics, Nanjing 211106, China}
\email{chenpengmath@nuaa.edu.cn}

\author[T.~Qi]{Tianyi Qi}
\address[T.~Qi]{School of Mathematics, Nanjing University of Aeronautics and
Astronautics, Nanjing 211106, China}
\email{qitianyi@nuaa.edu.cn}

\author[T.~Zhang]{Ting Zhang \textsuperscript{*}}
\thanks{Ting Zhang\textsuperscript{*}: Corresponding author. Email: zhangtstat@nuist.edu.cn }
\address[T.~Zhang\textsuperscript{*}]{School of Mathematics and Statistics, Nanjing University of Information Science and Technology, Nanjing 210044, China}
\email{zhangtstat@nuist.edu.cn}

\makeatletter
\@namedef{subjclassname@2020}{%
  \textup{2020} Mathematics Subject Classification}
\makeatother

\subjclass[2020]{}

\begin{abstract}
Let $S_{n}$ be a sum of independent identically distribution random variables with finite first moment and $h_{M}$ be a call function defined by $g_{M}(x)=\max\{x-M,0\}$ for $x\in\mathbb{R}$, $M>0$. In this paper, we assume the random variables are in the domain $\mathcal{R}_{\alpha}$ of normal attraction of a stable law of exponent $\alpha$, then for $\alpha\in(1,2)$, we use the Stein's method developed in \cite{CNX21} to give uniform and non uniform bounds on $\alpha$-stable approximation for the call function without additional moment assumptions. These results will make the approximation theory of call function applicable to the lower moment conditions, and greatly expand the scope of application of call function in many fields.
\end{abstract}

\maketitle

\noindent {\bf Key words:} Stable approximation; Call function; Stein's method; CDO pricing.\\

\section{Introduction}

For a fixed constant $M>0$, the call function $g_{M}$ is defined by
\begin{align*}
g_{M}(x)=(x-M)_{+},\qquad x\in\mathbb{R},
\end{align*}
where $(x-M)_{+}=\max\{x-M,0\}$. In recent years, the call function has been used in many aspects, such as, risk theory \cite{Jag84,WCWK12}, finance \cite{CZHW24,VX24}, etc. For example, one of the representative applications of call function is the collateralized debt obligation (CDO), which belongs to the financial instrument and plays a crucial role in the financial crisis in the USA. For more details on the CDO, we refer the reader to \cite{ACHL12,PJYT24,NY09,NYD08,AN21,KN20,NK24,NKN22} and the references therein.

For a long time, many authors have been working on the approximation for the call function and one of the commonly used methods in recent years is the Stein's method. The Stein's method was proposed by Stein in \cite{Ste72} when dealing with the problem of normal approximation. Now, the Stein's method has become a commonly used tool for obtaining error bounds between two probability distributions. For Stein's method, we refer the reader to \cite{BRZ24,Bon20,Cha07,CGS10,FLS24,NP09,Rol22,Son20}. To be specific, \cite{NYD08} used Stein's method and zero bias transformation to obtain a bound on Gaussian and Poisson approximation, then a more thorough analysis of the approximation was conducted using CDO. Subsequently, the approximations are further investigated in \cite{PJYT24,NY09,KN20,NK24,NKN22} and better non uniform upper bounds are obtained, as well as some better upper bounds with correction terms. Moreover, \cite{AN21} studied the negative binomial approximation for call function by Stein's method.

However, it is a pity that all kinds of approximation for call function studies above have the assumption that the second moments or variances are bounded, and some non uniform results even require the existence of higher-order moments. These conditions rule out many heavy-tailed distributions in theory and applications. In recent years, the heavy-tailed distribution, especially for the stable distribution, have been widely used in various fields, such as statistics, finance and economics \cite{AFV10,M63,N14,PPU21,RM00}. Hence, the research on the stable approximation of call function will further expand the use of heavy-tailed distributions in these fields.

As an important part of limit theorems, stable distribution and stable central limit theorem (CLT) play a significant role in probability theory, which have been used in many fields, such as economics \cite{M63,N14}, deep learning \cite{FFP23,JLL23} and so on. Recently, new progress has been made in the study of stable approximation by Stein's method. \cite{lihu} considered the symmetric $\alpha$-stable distribution with $\alpha\in(1,2)$, and first used the Stein's method and the Stein kernel to obtain the optimal convergence rate of the stable CLT in the Wasserstein-1 distance. Along this research direction, \cite{CNX21} generalized the method to the asymmetric case with the  help of the zero-biased coupling and Taylor-like extension, then the method was further extended to the non-integrable case \cite{CNXYZ22} and multivariate case \cite{CNXYZ22}. For more research on stable approximation, we refer the reader to \cite{AH19,AH192,banis,BU20,DN02,HDC24} and the references therein.

In this paper, we will use the Stein's method developed in \cite{CNX21,lihu} to give uniform and non uniform bounds on $\alpha$-stable approximation for the call function without additional moment assumptions. In particular, for the symmetric case (that is, $\delta=0$), we further improve the non uniform upper bounds with the help of heat kernel estimates. These results will make the approximation theory of call function applicable to the lower moment conditions, and greatly expand the scope of application of call function in CDO and other fields.

Now, we first give the definition of the $\alpha$-stable distribution.

\begin{definition}\label{defstable}
Let $\alpha\in(1,2)$, $\sigma\geq 0$ and $\delta\in [-1,1]$ be real numbers. We say that $Y$ is distributed according to the \emph{$\alpha$-stable law with parameters $\sigma$ and $\delta$}, and we write $Y\sim S_\alpha(\sigma,\delta)$, to indicate that, for all $\lambda\in(-\infty,\infty)$,
\[
\mathbb{E}[e^{i\lambda Y}]=
\begin{array}{lll}
{\rm exp}\big\{-\sigma^\alpha|\lambda|^\alpha(1-i\,\delta\,{\rm sign}(\lambda)\,\tan\frac{\pi\alpha}2)\big\}
\end{array}
\]
\end{definition}

It is easy to check that $Y/\sigma\sim  S_\alpha(1,\delta)$  if $Y\sim  S_\alpha(\sigma,\delta)$. Hence, we will only consider the stable distributions for $\sigma=1$.

\subsection{Main assumptions and theorems}
In this paper, we assume the random variables are in the domain $\mathcal{R}_{\alpha}$ of normal attraction of a stable law of exponent $\alpha$, which is defined as follows:

\begin{definition}
If $X$ has a distribution function of the form
\begin{equation}\label{px}
F_{X}(x)=\big(1-\frac{A+B(x)}{|x|^{\alpha}}(1+\delta)\big){\bf 1}_{[0,\infty)}(x)+\frac{A+B(x)}{|x|^{\alpha}}(1-\delta){\bf 1}_{(-\infty,0)}(x)\big),
\end{equation}
where $\alpha\in(1,2)$, $A>0$, $\delta\in[-1,1]$ and  $B: \mathbb{R}\to\mathbb{R}$ is a bounded function vanishing at $\pm\infty$, then we say that $X$ is in the domain $\mathcal{R}_\alpha$ of normal attraction of a stable law of exponent $\alpha$.
\end{definition}

In (\ref{px}), the function $B$ is supposed bounded, that is, there exists $L>0$ such that $|B(x)|\leq L$. To be specific,
we assume there exist constants $L>0$ and $\gamma\geq 0$ such that
\begin{align}\label{bound}
|B(x)|\leq\frac{L}{|x|^{\gamma}},\quad x\neq0.
\end{align}

Notice that making $\gamma=0$ in (\ref{bound}) simply means that we do not want to make any extra assumption on $B$ defined in (\ref{px}).

To facilitate the formulation of the theorem, we first define two constants, which originate from the heat kernel estimates of the $\alpha$-stable process. Define
\begin{align}\label{eta1}
\eta_{1,\alpha,\delta}=\max\left\{\frac{\Gamma(\frac{1}{\alpha})}{\pi\alpha},\frac{(\alpha-1)\left(1+\delta\,{\rm tan}\frac{\pi\alpha}{2}\right)\left(2+\delta\,{\rm tan}\frac{\pi\alpha}{2}\right)\Gamma\left(\frac{\alpha-1}{\alpha}\right)}{\pi}\right\},
\end{align}
\begin{align}\label{eta2}
\eta_{2,\alpha,\delta}={\rm Beta}\left(\frac{2}{\alpha},1-\frac{1}{\alpha}\right)\max\left\{\frac{\Gamma(\frac{2}{\alpha})}{\pi\alpha},\frac{\left(1+\delta\,{\rm tan}\frac{\pi\alpha}{2}\right)\left(1+2\alpha+\alpha\delta\,{\rm tan}\frac{\pi\alpha}{2}\right)}{\pi}\right\},
\end{align}
where the Beta function ${\rm Beta}(u,v)=\int_{0}^{1}r^{u-1}(1-r)^{v-1}d r$ for any $u,v>0$.

Now, we state the first theorem.
\begin{theorem}[Uniform bound]\label{thm1}
Let $X_1,X_2,\ldots$ be independent and identically distributed random variables defined on a common probability space, and suppose that $X_1$ has
a distribution of the form (\ref{px}) with $B(x)$ satisfying (\ref{bound}).
Set $\sigma=\left(A\alpha\int_{-\infty}^{\infty} \frac{1-\cos y}{|y|^{1+\alpha}}dy\right)^\frac1\alpha$ and
\begin{equation}\label{sn}
S_n=\frac{1}{\sigma n^{\frac{1}{\alpha}}}\sum_{i=1}^{n}(X_i-\mathbb{E}[X_i]).
\end{equation}
Then we have
\begin{align*}
\left|\mathbb{E}\left(S_n-M\right)_{+}-\mathbb{E}\left(S_\alpha (1,\delta)-M\right)_{+}\right|\leq c_{1}{R_{n}},
\end{align*}
where
\begin{align}\label{Rn}
R_{n}=\begin{cases}
n^{1-\frac{2}{\alpha}},\quad &\gamma\in(2-\alpha,\infty),\\
n^{1-\frac{2}{\alpha}}\left|\log \left(\sigma n^{\frac{1}{\alpha}}\right)\right|,\quad &\gamma=2-\alpha,\\
n^{-\frac{(\alpha-1)\gamma}{\alpha(1-\gamma)}},\quad &\gamma\in(0,2-\alpha),\\
n^{1-\frac{2}{\alpha}}+n^{1-\frac{2}{\alpha}}\int_{-\sigma n^{\frac{1}{\alpha}}}^{\sigma n^{\frac{1}{\alpha}}}\frac{|B(x)|}{|x|^{\alpha-1}}dx+(\sup_{|x|\geq \sigma n^{\frac{1}{\alpha}}}|B(x)|)^{\alpha-1}, &\gamma=0,
\end{cases}
\end{align}
and
\begin{align*}
c_{1}=&
\left[\frac{16d_{\alpha}\mathbb{E}\left[\left|\xi_{1}-\mathbb{E}[\xi_{1}]\right|^{2-\alpha}\right]}{(2-\alpha)(\alpha-1)\sigma^{2-\alpha}}
+\frac{12\mathbb{E}\left[|\xi_{1}|\right]\left|\mathbb{E}[\xi_{1}]\right|}{\sigma^{2}}\right]\eta_{2,\alpha,\delta}\\
&+
\begin{cases}
\frac{8(2A)^{\frac{2}{\alpha}}}{\sigma^{2}}\left[\frac{2}{2-\alpha}+\frac{2L}{\alpha+\gamma-2}(2A)^{-\frac{\alpha+\gamma}{\alpha}}\right]\eta_{2,\alpha,\delta},
&\gamma\in(2-\alpha,\infty),\\
\frac{1}{\sigma^{2}}\left[4\left(\frac{4(2A)^{\frac{2}{\alpha}}}{2-\alpha}+\frac{8L}{\alpha-1}\right)\eta_{2,\alpha,\delta}
+\frac{8\alpha^{2}(A+L)-4L}{\alpha-1}\right],&\gamma=2-\alpha,\\
\sigma^{\frac{\alpha-\gamma}{\gamma-1}}\left[4\left(\frac{4(2A)^{\frac{2}{\alpha}}}{2-\alpha}+\frac{8L}{2-\alpha-\gamma}\right)\eta_{2,\alpha,\delta}
+\frac{8\alpha^{2}(A+L)-4L}{\alpha-1}\right], &\gamma\in(0,2-\alpha),\\
4\max\left\{\frac{2\alpha(2A)^{\frac{2}{\alpha}}}{(2-\alpha)\sigma^{2}},\frac{4}{\sigma^{2}},
\frac{8}{(2-\alpha)\sigma^{\alpha}}+\frac{2(2A)^{\frac{2}{\alpha}}}{\sigma^{\alpha}}+\frac{8\alpha^{2}(A+L)-4L}{4(\alpha-1)\eta_{2,\alpha,\delta}\sigma^{\alpha}}\right\}
\eta_{2,\alpha,\delta}, &\gamma=0.
\end{cases}
\end{align*}
\end{theorem}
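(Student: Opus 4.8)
The plan is to run the $\alpha$-stable Stein's method of \cite{CNX21,lihu} with the call function $g_M$ taken as the test function. Let $Y\sim S_\alpha(1,\delta)$ and let $\mathcal{A}_\alpha$ be the Stein operator characterizing $Y$, so that $\mathbb{E}[\mathcal{A}_\alpha f(Y)]=0$ for every sufficiently regular $f$. First I would solve the Stein equation
\[
\mathcal{A}_\alpha f_M(x)=g_M(x)-\mathbb{E}[g_M(Y)],
\]
using the semigroup representation
\[
f_M(x)=-\int_0^\infty\big(P_t g_M(x)-\mathbb{E}[g_M(Y)]\big)\,dt,
\]
where $(P_t)_{t\ge0}$ is the semigroup whose generator underlies $\mathcal{A}_\alpha$ and whose invariant law is that of $Y$. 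The whole problem then reduces to estimating $\mathbb{E}[\mathcal{A}_\alpha f_M(S_n)]$, since $\mathbb{E}[g_M(S_n)]-\mathbb{E}[g_M(Y)]=\mathbb{E}[\mathcal{A}_\alpha f_M(S_n)]$.

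The next step is to control the regularity of $f_M$. This is where the kink of $g_M$ at the strike $x=M$ enters: $g_M$ is only Lipschitz, with $g_M'=\mathbf{1}_{(M,\infty)}$ and $g_M''=\delta_M$ in the distributional sense, so the smoothness available for smooth test functions is lost. Instead I would differentiate the semigroup representation, transferring derivatives onto the $\alpha$-stable density $p_t$, and then apply heat kernel estimates for $p_t$ and its derivatives. Integrating the resulting kernel bounds in $t$ produces the constant $\eta_{2,\alpha,\delta}$ of (\ref{eta2}) as the effective substitute for $\|f_M''\|_\infty$ in the infinite-variance regime; the companion constant $\eta_{1,\alpha,\delta}$ of (\ref{eta1}) controls the first-order term and surfaces mainly in the non-uniform estimates, which is consistent with the fact that only $\eta_{2,\alpha,\delta}$ appears in $c_1$.

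With these bounds in hand, I would expand $\mathbb{E}[\mathcal{A}_\alpha f_M(S_n)]$ using the i.i.d.\ structure of $S_n$. Writing $S_n=S_n^{(i)}+(X_i-\mathbb{E}[X_i])/(\sigma n^{1/\alpha})$ and invoking the zero-biased coupling together with the Taylor-like expansion of \cite{CNX21}---which is needed precisely because $\alpha<2$ forbids a second-order Taylor remainder controlled by the (infinite) variance $\mathbb{E}[X_1^2]$---the quantity $\mathbb{E}[\mathcal{A}_\alpha f_M(S_n)]$ reduces to a comparison between the Lévy measure of the summands and the limiting $\alpha$-stable Lévy measure. The leading discrepancy is governed by $B$ from (\ref{px}): the error splits into a bulk contribution from $|x|\le\sigma n^{1/\alpha}$, namely $n^{1-2/\alpha}\int_{-\sigma n^{1/\alpha}}^{\sigma n^{1/\alpha}}|B(x)|/|x|^{\alpha-1}\,dx$, a tail contribution $(\sup_{|x|\ge\sigma n^{1/\alpha}}|B(x)|)^{\alpha-1}$, and a baseline rate $n^{1-2/\alpha}$; this is exactly the $\gamma=0$ line of (\ref{Rn}).

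Finally I would carry out the case analysis in $\gamma$. Substituting the decay hypothesis (\ref{bound}), $|B(x)|\le L|x|^{-\gamma}$, into the two error pieces and balancing the truncation level yields the four regimes of $R_n$: for $\gamma>2-\alpha$ the inner integral converges and the rate saturates at $n^{1-2/\alpha}$; at the borderline $\gamma=2-\alpha$ the integral diverges logarithmically, producing the factor $|\log(\sigma n^{1/\alpha})|$; and for $0<\gamma<2-\alpha$ the tail term dominates and gives $n^{-(\alpha-1)\gamma/(\alpha(1-\gamma))}$. I expect the main obstacle to be twofold: propagating the heat kernel estimates through the distributional second derivative $g_M''=\delta_M$ so as to recover the honest constant $\eta_{2,\alpha,\delta}$ rather than a crude one, and organizing the Taylor-like remainder so that every term remains integrable against the $(2-\alpha)$-moment $\mathbb{E}[|\xi_1-\mathbb{E}[\xi_1]|^{2-\alpha}]$ appearing in $c_1$, which is the only moment one is permitted to use below the variance threshold.
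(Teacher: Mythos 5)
Your proposal is correct and follows essentially the same route as the paper: the Stein equation of \cite{CNX21} with its semigroup solution \eqref{phih}, heat kernel estimates yielding $\|f''_{g}\|_{\infty}\leq 4\eta_{2,\alpha,\delta}$, and the zero-bias/Taylor-like expansion producing $R_{n}$ with the case analysis in $\gamma$. One remark: the paper packages this more economically---since $g_{M}\in{\rm Lip}(1)$ it bounds the left-hand side by the Wasserstein-1 distance and invokes the proof of \cite[Theorem 1.4]{CNX21} directly, so the obstacle you anticipate from the kink of $g_{M}$ (its distributional second derivative) never arises in the uniform bound, because the regularity lemma uses only $\|g'\|_{\infty}\leq 1$ and never touches $g''$.
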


Before giving the second main theorem, we define some constants as follows:
\begin{align}
&\eta_{3,\alpha,\delta}=\frac{\left(4\alpha+2\alpha^{\frac{2\alpha-2}{\alpha}}-1\right)\eta_{1,\alpha,\delta}}{(\alpha-1)},\nonumber\\
&\eta_{4,\alpha}=\max\left\{\frac{\Gamma(\frac{1}{\alpha})}{\alpha},\frac{\alpha 2^{\alpha-1}\sin\frac{\alpha\pi}{2}\Gamma(\frac{1+\alpha}{2})\Gamma(\frac{\alpha}{2})}{\pi^{\frac{3}{2}}}\right\}\frac{\left(4\alpha+2\alpha^{\frac{2\alpha-2}{\alpha}}-1\right)}{\alpha-1}.\label{6}
\end{align}

Now, we are in the position to give the non uniform bound:

\begin{theorem}[Non Uniform bound]\label{thm2}
Keep the same assumptions and notation as in Theorem \ref{thm1}. Then we have
\begin{align*}
\left|\mathbb{E}\left(S_n-M\right)_{+}-\mathbb{E}\left(S_\alpha (1,\delta)-M\right)_{+}\right|\leq c_{2,M}R_{n},
\end{align*}
where $R_{n}$ is defined by \ref{Rn} and
\begin{align*}
c_{2,M}=&
\frac{4d_{\alpha}\mathbb{E}\left[\left|\xi_{1}-\mathbb{E}[\xi_{1}]\right|^{2-\alpha}\right]}{(2-\alpha)(\alpha-1)\sigma^{2-\alpha}}\frac{\eta_{3,\alpha,\delta}}{M^{\frac{2(\alpha-1)}{3\alpha-1}}}
+\frac{3\mathbb{E}\left[|\xi_{1}|\right]\left|\mathbb{E}[\xi_{1}]\right|}{\sigma^{2}}\frac{\eta_{3,\alpha,\delta}}{M^{\frac{2(\alpha-1)}{3\alpha-1}}}\\
&+\eta_{3,\alpha,\delta}
\begin{cases}
\frac{2(2A)^{\frac{2}{\alpha}}}{\sigma^{2}}\left[\frac{2}{2-\alpha}+\frac{2L}{\alpha+\gamma-2}(2A)^{-\frac{\alpha+\gamma}{\alpha}}\right]M^{-\frac{2(\alpha-1)}{3\alpha-1}},&\gamma\in(2-\alpha,\infty),\\
\frac{1}{\sigma^{2}}\left[\left(\frac{4(2A)^{\frac{2}{\alpha}}}{2-\alpha}+\frac{8L}{\alpha-1}\right)
+q_{1}\right]\frac{2(\alpha-1)\log M}{3\alpha-1}M^{-\frac{2(\alpha-1)}{3\alpha-1}},& \gamma=2-\alpha,\\
\sigma^{\frac{\alpha-\gamma}{\gamma-1}}\left[\left(\frac{4(2A)^{\frac{2}{\alpha}}}{2-\alpha}+\frac{8L}{2-\alpha-\gamma}\right)
+q_{1}\right]M^{-\frac{2(\alpha-1)^{2}}{(3\alpha-1)(1-\gamma)}}, &\gamma\in(0,2-\alpha),\\
\max\left\{\frac{2\alpha(2A)^{\frac{2}{\alpha}}}{(2-\alpha)\sigma^{2}},\frac{4}{\sigma^{2}},
\frac{8}{(2-\alpha)\sigma^{\alpha}}+\frac{2(2A)^{\frac{2}{\alpha}}}{\sigma^{\alpha}}+q_{1}\right\}M^{-\frac{2(\alpha-1)^{2}}{(3\alpha-1)}},
&\gamma=0,
\end{cases}
\end{align*}
with
\begin{align}\label{q1}
q_{1}=\frac{8\alpha^{2}(A+L)-4\alpha L}{(\alpha-1)\eta_{3,\alpha,\delta}}.
\end{align}

In particular, when $\delta=0$, the order of $M$ can be improved and the $c_{2,M}$ can be reduced to $c_{3,M}$, which is defined as follows:
\begin{align*}
c_{3,M}=&
\frac{4d_{\alpha}\mathbb{E}\left[\left|\xi_{1}-\mathbb{E}[\xi_{1}]\right|^{2-\alpha}\right]}{(2-\alpha)(\alpha-1)\sigma^{2-\alpha}}\frac{\eta_{4,\alpha}}{M^{\frac{\alpha^{2}-1}{\alpha^{2}+2\alpha-1}}}
+\frac{3\mathbb{E}\left[|\xi_{1}|\right]\left|\mathbb{E}[\xi_{1}]\right|}{\sigma^{2}}\frac{\eta_{4,\alpha}}{M^{\frac{\alpha^{2}-1}{\alpha^{2}+2\alpha-1}}}\\
&+\eta_{4,\alpha}
\begin{cases}
\frac{2(2A)^{\frac{2}{\alpha}}}{\sigma^{2}}\left[\frac{2}{2-\alpha}+\frac{2L}{\alpha+\gamma-2}(2A)^{-\frac{\alpha+\gamma}{\alpha}}\right]M^{-\frac{\alpha^{2}-1}{\alpha^{2}+2\alpha-1}},&\gamma\in(2-\alpha,\infty),\\
\frac{1}{\sigma^{2}}\left[\left(\frac{4(2A)^{\frac{2}{\alpha}}}{2-\alpha}+\frac{8L}{\alpha-1}\right)
+q_{2}\right]\frac{(\alpha^{2}-1)\log M}{\alpha^{2}+2\alpha-1}M^{-\frac{\alpha^{2}-1}{\alpha^{2}+2\alpha-1}},& \gamma=2-\alpha,\\
\sigma^{\frac{\alpha-\gamma}{\gamma-1}}\left[\left(\frac{4(2A)^{\frac{2}{\alpha}}}{2-\alpha}+\frac{8L}{2-\alpha-\gamma}\right)
+q_{2}\right]M^{-\frac{(\alpha^{2}-1)(\alpha-1)}{(\alpha^{2}+2\alpha-1)(1-\gamma)}}, &\gamma\in(0,2-\alpha),\\
\max\left\{\frac{2\alpha(2A)^{\frac{2}{\alpha}}}{(2-\alpha)\sigma^{2}},\frac{4}{\sigma^{2}},
\frac{8}{(2-\alpha)\sigma^{\alpha}}+\frac{2(2A)^{\frac{2}{\alpha}}}{\sigma^{\alpha}}+q_{2}\right\}M^{-\frac{(\alpha^{2}-1)(\alpha-1)}{\alpha^{2}+2\alpha-1}},
&\gamma=0,
\end{cases}
\end{align*}
with $q_{2}=\frac{8\alpha^{2}(A+L)-4\alpha L}{(\alpha-1)\eta_{4,\alpha}}$.
\end{theorem}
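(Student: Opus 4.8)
The plan is to run the same Stein's method scheme of \cite{CNX21,lihu} that proves Theorem \ref{thm1}, but to feed it non uniform, $M$-weighted bounds on the Stein solution in place of the uniform ones. Let $Y\sim S_\alpha(1,\delta)$ and let $\mathcal{A}$ be the Stein operator characterising $Y$, derived as in \cite{CNX21,lihu} from the generator of a Markov semigroup with invariant law $S_\alpha(1,\delta)$; the solution of the Stein equation $\mathcal{A}f_M=g_M-\mathbb{E}[g_M(Y)]$ is represented through this semigroup. Because $g_M$ is Lipschitz with $g_M'=\II_{(M,\infty)}$ and a distributional second derivative concentrated at the level $M$, the smoothing action of the semigroup yields a solution $f_M$ whose derivatives $f_M'$ and $f_M''$ are localised near $M$ and decay into the tails at the polynomial rate of the stable density. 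Writing out the normalised sum $S_n$ and using $\mathbb{E}[g_M(S_n)]-\mathbb{E}[g_M(Y)]=\mathbb{E}[\mathcal{A}f_M(S_n)]$ reduces everything to estimating the right-hand side.

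The decisive new ingredient is a non uniform estimate for $f_M''$ (and the lower derivatives). Differentiating the semigroup representation and inserting the two-sided heat kernel bounds for the stable density — the same bounds from which $\eta_{1,\alpha,\delta}$, and hence $\eta_{3,\alpha,\delta}$, are extracted — I would control weighted quantities of the type $\int|f_M''(x)|\,w(x)\,\dif x$ and show that they carry a negative power of $M$. The power is produced by splitting the integration at a threshold $|x|\asymp M^{a}$: on the inner region one uses the uniform $L^\infty$ control of $f_M''$ already available from Theorem \ref{thm1}, on the outer region one uses the polynomial tail decay of the stable density, and one then chooses $a$ to balance the two contributions. This balancing is exactly what produces the exponent $\tfrac{2(\alpha-1)}{3\alpha-1}$. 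In the symmetric case $\delta=0$ the density is even and admits sharper, sign-definite derivative estimates, encoded here in $\eta_{4,\alpha}$; rerunning the same optimisation with these estimates gives the improved exponent $\tfrac{\alpha^2-1}{\alpha^2+2\alpha-1}$ and the reduced constant $c_{3,M}$.

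With these solution estimates fixed, I would reproduce the decomposition behind Theorem \ref{thm1}. Applying the zero-biased coupling and the Taylor-like extension of \cite{CNX21} to $\mathbb{E}[\mathcal{A}f_M(S_n)]$ splits it into a leading discrepancy term — the gap between the empirical L\'evy structure of the increments and the stable L\'evy measure, controlled by the fractional moment $\mathbb{E}[|\xi_1-\mathbb{E}[\xi_1]|^{2-\alpha}]$ and now weighted by $\eta_{3,\alpha,\delta}M^{-2(\alpha-1)/(3\alpha-1)}$ — a centring term governed by $\mathbb{E}[|\xi_1|]\,|\mathbb{E}[\xi_1]|$, and a remainder encoding the normal-attraction defect $B$. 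Integrating the bound $|B(x)|\le L|x|^{-\gamma}$ against the weight inherited from $f_M''$ is what separates the four regimes of $R_n$: the cases $\gamma\in(2-\alpha,\infty)$, the borderline $\gamma=2-\alpha$ producing the logarithm, $\gamma\in(0,2-\alpha)$, and $\gamma=0$, in exact parallel with $c_1$. In the two small-$\gamma$ regimes the level $M$ and the truncation scale $\sigma n^{1/\alpha}$ enter the same integral, which is why the $M$-exponent there sharpens to $\tfrac{2(\alpha-1)^2}{(3\alpha-1)(1-\gamma)}$ rather than staying at $\tfrac{2(\alpha-1)}{3\alpha-1}$.

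I expect the second paragraph to be the main obstacle. Obtaining heat kernel bounds on $f_M''$ that are simultaneously sharp in $|x|$ and in $M$, and carrying out the threshold optimisation so that the exponent comes out exactly as $\tfrac{2(\alpha-1)}{3\alpha-1}$ (respectively $\tfrac{\alpha^2-1}{\alpha^2+2\alpha-1}$ when $\delta=0$), is the delicate part. The asymmetric case adds a further difficulty: the stable density has unequal left and right tails, with constants $1\pm\delta\tan\tfrac{\pi\alpha}{2}$, so the weight functions must be tracked separately on $(-\infty,0)$ and $(0,\infty)$, and keeping these constants explicit throughout is precisely what makes $\eta_{3,\alpha,\delta}$ (and, for $\delta=0$, $\eta_{4,\alpha}$) appear in the final constant. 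Once the non uniform solution estimates are in place, the remaining computations merely repeat those already carried out for Theorem \ref{thm1}.
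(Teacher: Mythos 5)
Your overall architecture (rerun the Theorem~\ref{thm1} scheme with $M$-weighted estimates on the Stein solution, plus a sharper kernel bound when $\delta=0$) matches the paper, but the step you yourself flag as decisive is carried out by a mechanism that does not work as you describe it. You propose to control $\int |f''_{g_M}(x)|\,w(x)\,dx$ by splitting space at $|x|\asymp M^a$ and using the \emph{uniform} bound $\|f''_{g_M}\|_\infty\le 4\eta_{2,\alpha,\delta}$ on the inner region: that inner contribution is then $O(1)$ in $M$ (the relevant weights — the truncated L\'evy discrepancy and the $B$-integrals — put mass near the origin that does not shrink with $M$), so no negative power of $M$ can emerge from this split. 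What the paper proves instead (Lemma~\ref{nonregu}) is a \emph{pointwise} bound, uniform in $y$, of the form $\|f''_{g_M}\|_\infty\le \eta_{3,\alpha,\delta}M^{-2(\alpha-1)/(3\alpha-1)}$. Since $g_M'=\II_{(M,\infty)}$, differentiating the semigroup representation \eqref{phih} gives $|f''_{g_M}(y)|\le\int_0^1 r^{\frac{2-\alpha}{\alpha}}(1-r)^{-\frac{1}{\alpha}}\,p_{1,\delta}\big((M-r^{1/\alpha}y)/(1-r)^{1/\alpha}\big)\,dr$, and the splitting is in the \emph{time} variable $r$, on a window of width $\asymp M^{-\theta}$ around the critical value where $r^{1/\alpha}y\approx M$: off the window one uses $p_{1,\delta}(z)\lesssim \eta_{1,\alpha,\delta}z^{-2}$ (the quadratic decay available for general $\delta$ via two integrations by parts in the Fourier integral), on the window one uses only the integrable singularity $(1-r)^{-1/\alpha}$; optimizing $\theta=\frac{2\alpha}{3\alpha-1}$ yields the exponent. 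When $\delta=0$ the improvement comes not from ``sign-definite derivative estimates'' but from the true tail order $p_{1,0}(z)\lesssim\min\{1,|z|^{-(1+\alpha)}\}$ (Lemma~\ref{heat0}), which rebalances the same split to give $M^{-(\alpha^2-1)/(\alpha^2+2\alpha-1)}$.

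There is a second missing idea: plugging the new $\|f''_{g_M}\|_\infty$ into the Theorem~\ref{thm1} constant is \emph{not} enough for $\gamma\in(0,2-\alpha]$ and $\gamma=0$, because in those regimes $c'$ contains terms proportional to $\|f'_{g_M}\|_\infty\le\alpha$, which carry no $M$-decay and would destroy the non-uniform bound outright. The paper handles this by redoing the Taylor-like extension (Lemma~\ref{Taylor}) with the truncation level enlarged from $N$ to $M^{\theta}N$ and re-optimizing, $\theta=\frac{2(\alpha-1)}{(3\alpha-1)(1-\gamma)}$, so that the $f'$-controlled tail terms pick up a factor $M^{\theta(1-\alpha)}$ at the price of weakening the $f''$-controlled bulk terms. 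This also corrects your reading of the exponents: for $\gamma<2-\alpha$ one has $\frac{2(\alpha-1)^2}{(3\alpha-1)(1-\gamma)}<\frac{2(\alpha-1)}{3\alpha-1}$, so the $M$-decay in the small-$\gamma$ regimes \emph{degrades} relative to the $\gamma>2-\alpha$ case (part of the decay budget is spent absorbing the non-decaying $\|f'_{g_M}\|_\infty$ terms); it does not sharpen, as your third paragraph asserts. Without the pointwise semigroup estimate and this modified truncation, the proof as proposed does not close.
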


\begin{remark}

(i) In Theorem \ref{thm2}, when $\alpha\in(1,2)$, we have
\begin{align*}
\frac{\alpha^{2}-1}{\alpha^{2}+2\alpha-1}-\frac{2(\alpha-1)}{3\alpha-1}=\frac{(\alpha-1)^{3}}{(\alpha^{2}+2\alpha-1)(3\alpha-1)}>0.
\end{align*}
Hence, the constant $c_{3,M}$ is better than the constant $c_{2,M}$ in terms of the order of $M$.

(ii) The convergence rate of the sample size $n$ in Theorems \ref{thm1} and \ref{thm2} matches the optimal one in the Kolmogorov distance calculated by \cite{Hal81}. In Example \ref{Pareto} below, the simulation result further confirms that the convergence rate is optimal.
\end{remark}

\begin{example}[Pareto distribution]\label{Pareto} Let the random variable $X_{1}$ has the following density function
\begin{align*}
p_{X_{1}}(x)=\frac{\alpha}{2|x|^{\alpha+1}}{\bf 1}_{\{|x|\geq1\}}.
\end{align*}
In this case, $A=\frac{1}{2}$, $\delta=0$, $B(x)=\left(\frac{|x|^{\alpha}}{2}-\frac{1}{2}\right){\bf 1}_{\{|x|\leq1\}}$. We derive from Theorems \ref{thm1} and \ref{thm2} with $\gamma=2$ and $L=\frac{1}{2}$ that $\left|\mathbb{E}\left(S_n-M\right)_{+}-\mathbb{E}\left(S_\alpha (1,\delta)-M\right)_{+}\right|=O(n^{\frac{\alpha-2}{\alpha}}M^{\frac{1-\alpha^{2}}{\alpha^{2}+2\alpha-1}})$.

When $\alpha=1.5$, we check the relation between the sample size $n$ and the difference between the numerical and true probability distributions in the distance of Kolmogorov-Smirnov(KS) test, see Figure \ref{figure:1}(a). Here we use the method from \cite[Theorem 1.3]{N20} to obtain the $\alpha$-stable distribution (with 500 paths), that is, let $\Theta$ and $W$ be independent with $\Theta$ uniformly distributed on $\left(-\frac{\pi}{2},\frac{\pi}{2}\right)$, $W$ exponentially distributed with mean 1, then
\begin{align*}
S_{\alpha}(1,0)=\frac{\sin(\alpha\Theta)}{(\cos\Theta)^{\frac{1}{\alpha}}}\left[\frac{\cos\left((\alpha-1)\Theta\right)}{W}\right]^{\frac{1-\alpha}{\alpha}}.
\end{align*}
We also construct numerical $S_{n}$ with the following pairs of the sample size and the number of paths $(n,N)$ that $(10^{2},10^{2})$, $(10^{3},10^{3})$, $(10^{4},10^{4})$ and $(10^{5},10^{5})$. Moreover, for a fixed 8000 paths, the empirical density functions are presented in Figure \ref{figure:1}(b) for different sample sizes $n=100,500,1000$.

\begin{figure}
  \centering
  \subfigure[The convergence rate along the sample size $n$]{\includegraphics[width=2.9in]{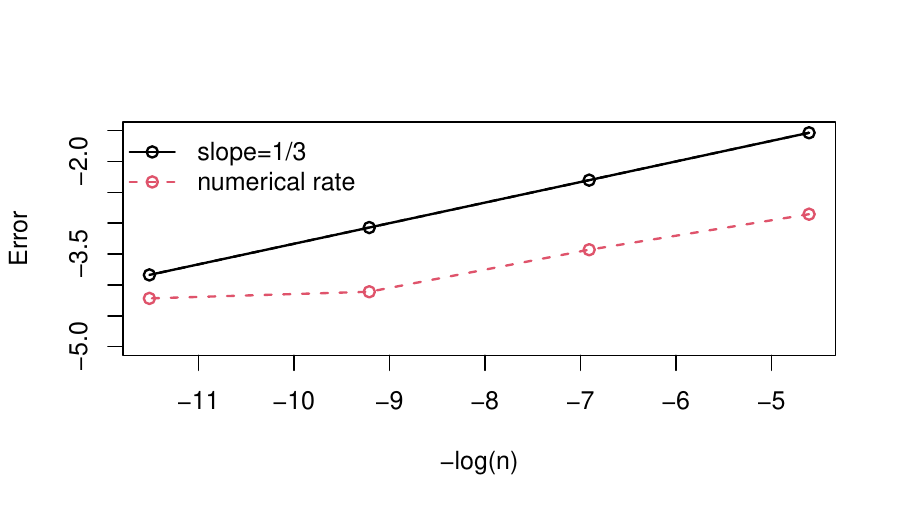}}
  \subfigure[The empirical density functions]{\includegraphics[width=2.9in]{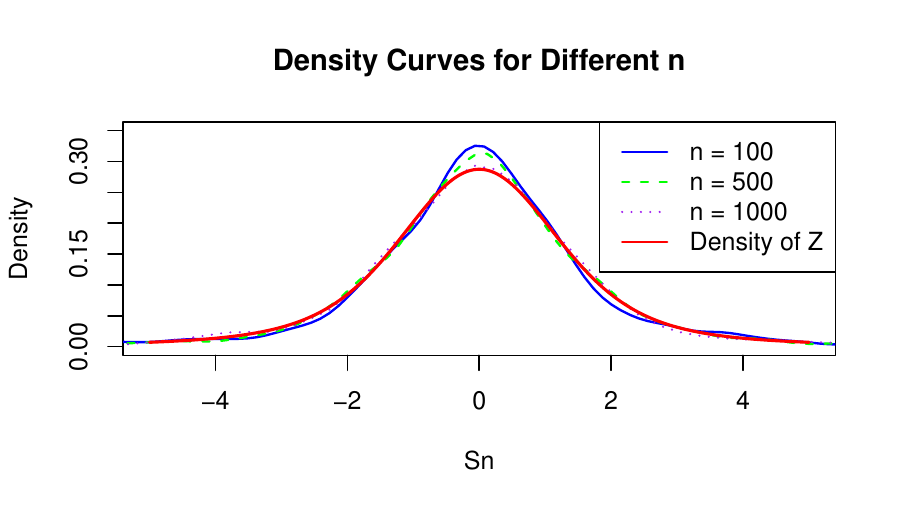}}
  \caption{The Pareto distribution with $\alpha=1.5$}
  \label{figure:1}
 \end{figure}

\end{example}

The paper is organized as follows: In section \ref{Stein}, we establish the Stein's equation and give the regularities of the solution to the equation, containing the uniform and non uniform regularities, which will be crucial to proving our main results. In Section \ref{mainproof}, we use the Stein's method and the Taylor-like extension to prove the Theorems \ref{thm1} and \ref{thm2} above. In Section \ref{conclusion}, some conclusions are summarised, and the future work is discussed. In addition, the proofs of heat kernel estimates used in Section \ref{Stein} and Taylor-like extension appearing in Section \ref{mainproof} are postponed to Appendixes \ref{HKE} and \ref{TLE}, respectively.

\section{Stein's method}\label{Stein}

In this section, we will introduce the Stein's equation, which plays a crucial role in the proof of the stable approximation for call function.

According to the Stein's method developed in \cite{CNX21}, we first give the definition of the infinitesimal generator of the $\alpha$-stable process.
\begin{definition}
For $f:\mathbb{R}\to\mathbb{R}$ in $\mathcal{C}_{b}^2$, define the operator as follows:
\[
(\mathcal{A}^{\alpha,\delta}f)(y)=d_\alpha \,
\int_{\infty}^{\infty} \frac{f(y+u)-f(y)-uf'(y)}{2|u|^{1+\alpha}}
\left[
(1+\delta){\bf 1}_{(0,\infty)}(u)+(1-\delta){\bf 1}_{(-\infty,0)}(u)
\right]
 du,
\]
where $d_\alpha = \left(
\int_0^\infty \frac{1-\cos u}{u^{1+\alpha}}du
\right)^{-1}$.
\end{definition}
Then, the Stein's equation is given as follows:
\begin{equation}\label{stablestein}
\mathcal{L}^{\alpha,\delta} f_g(y):=\mathcal{A}^{\alpha,\delta}f_{g}(y)-\frac{1}{\alpha}yf_{g}'(y)= g(y) - \nu(g),\qquad \forall y\in\mathbb{R},
\end{equation}
where $\nu$ is the distribution of $S_\alpha(1,\delta)$ and $\nu(g)=\int_{-\infty}^{\infty}g(x)\nu(dx)$.

It is well known that the distribution $S_{\alpha}(\sigma,\delta)$ admits a smooth density \cite[Proposition 2.5, (xii)]{sato}, denote it by  $p_{\sigma,\delta}$. Then, the density satisfies the following scaling property
\begin{align}\label{scale}
p_{\sigma,\delta}(y)=\sigma^{-1}p_{1,\delta}\left(\frac{y}{\sigma}\right),
\end{align}
and by \cite[Lemma 2.3]{CNX21}, the solution to equation \eqref{stablestein} is
\begin{equation}\label{phih}
f_g(y) = -\int_0^\infty \int_{-\infty}^{\infty} p_{(1-e^{-t})^{\frac{1}{\alpha}},\delta}(u-e^{-\frac{t}{\alpha}}y)(g(u)-\nu(g))dudt.
\end{equation}

Since the density $p_{\sigma,\delta}$ does not have an explicit expression, we need the following heat kernel estimates, which will be proved in Appendix \ref{HKE}.

\begin{lemma}\label{heat}
Let the random variable $Y\sim S_\alpha(1,\delta)$ and denote the density function of $Y$ by $p_{1,\delta}$. Then for any $y\in(-\infty,\infty)$, we have
\begin{align}\label{den}
p_{1,\delta}(y)\leq\eta_{1,\alpha,\delta}\min\left\{1,\frac{1}{y^{2}}\right\},\qquad
\left|p_{1,\delta}'(y)\right|\leq\frac{\eta_{2,\alpha,\delta}}{{\rm Beta}\left(\frac{2}{\alpha},1-\frac{1}{\alpha}\right)}\min\left\{1,\frac{1}{y^{2}}\right\}.
\end{align}
where $\eta_{1,\alpha,\delta}$ and $\eta_{2,\alpha,\delta}$ are defined by \eqref{eta1} and \eqref{eta2}, respectively.
\end{lemma}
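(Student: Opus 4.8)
The plan is to represent $p_{1,\delta}$ through Fourier inversion of the characteristic function in Definition \ref{defstable} (with $\sigma=1$) and to extract both the uniform bound (the factor $1$ in $\min\{1,y^{-2}\}$) and the quadratic decay (the factor $y^{-2}$) from two complementary estimates of the same integral. Write $\varphi(\lambda)=\exp\{-|\lambda|^\alpha(1-i\delta\,\mathrm{sign}(\lambda)\tan\frac{\pi\alpha}{2})\}$, so that $p_{1,\delta}(y)=\frac{1}{2\pi}\int_{-\infty}^\infty e^{-i\lambda y}\varphi(\lambda)\,d\lambda$ and, differentiating under the integral sign (legitimate since $e^{-|\lambda|^\alpha}$ forces rapid decay), $p_{1,\delta}'(y)=\frac{1}{2\pi}\int_{-\infty}^\infty (-i\lambda)e^{-i\lambda y}\varphi(\lambda)\,d\lambda$. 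The key simplification is that $|\varphi(\lambda)|=e^{-|\lambda|^\alpha}$, because the real part of the exponent is $-|\lambda|^\alpha$ irrespective of $\delta$; this is what reduces every integral below to a Gamma integral.

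First I would establish the uniform bounds, valid for all $y$ and used on $\{|y|\le 1\}$. Directly,
\[
|p_{1,\delta}(y)|\le\frac{1}{2\pi}\int_{-\infty}^\infty e^{-|\lambda|^\alpha}\,d\lambda=\frac{1}{\pi}\int_0^\infty e^{-\lambda^\alpha}\,d\lambda=\frac{\Gamma(1/\alpha)}{\pi\alpha},
\]
after the substitution $u=\lambda^\alpha$, and likewise $|p_{1,\delta}'(y)|\le\frac{1}{\pi}\int_0^\infty\lambda e^{-\lambda^\alpha}\,d\lambda=\frac{\Gamma(2/\alpha)}{\pi\alpha}$. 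These produce exactly the first entries inside the maxima defining $\eta_{1,\alpha,\delta}$ and $\eta_{2,\alpha,\delta}/\mathrm{Beta}(2/\alpha,1-1/\alpha)$.

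For the decay I would integrate by parts twice in $\lambda$, using $y^2e^{-i\lambda y}=-\partial_\lambda^2 e^{-i\lambda y}$, to get $y^2p_{1,\delta}(y)=-\frac{1}{2\pi}\int_{-\infty}^\infty e^{-i\lambda y}\varphi''(\lambda)\,d\lambda$ and $y^2p_{1,\delta}'(y)=-\frac{1}{2\pi}\int_{-\infty}^\infty e^{-i\lambda y}\partial_\lambda^2(\lambda\varphi(\lambda))\,d\lambda$. On $\lambda>0$, writing $\psi(\lambda)=c\lambda^\alpha$ with $c=1-i\delta\tan\frac{\pi\alpha}{2}$, one has $\varphi''=(-c\alpha(\alpha-1)\lambda^{\alpha-2}+c^2\alpha^2\lambda^{2\alpha-2})e^{-c\lambda^\alpha}$ and $\partial_\lambda^2(\lambda\varphi)=(-c\alpha(\alpha+1)\lambda^{\alpha-1}+c^2\alpha^2\lambda^{2\alpha-1})e^{-c\lambda^\alpha}$, with the same moduli on $\lambda<0$ by the Hermitian symmetry $\varphi(\lambda)=\overline{\varphi(-\lambda)}$. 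Taking absolute values, bounding $|c|\le 1+|\delta\tan\frac{\pi\alpha}{2}|$ and $|c|^2\le(1+|\delta\tan\frac{\pi\alpha}{2}|)^2$, and evaluating the Gamma integrals ($\int_0^\infty\lambda^{\alpha-2}e^{-\lambda^\alpha}d\lambda=\frac1\alpha\Gamma(\frac{\alpha-1}{\alpha})$, $\int_0^\infty\lambda^{2\alpha-2}e^{-\lambda^\alpha}d\lambda=\frac{\alpha-1}{\alpha^2}\Gamma(\frac{\alpha-1}{\alpha})$, and $\int_0^\infty\lambda^{\alpha-1}e^{-\lambda^\alpha}d\lambda=\int_0^\infty\lambda^{2\alpha-1}e^{-\lambda^\alpha}d\lambda=\frac1\alpha$), the combination $|c|+|c|^2$ is bounded by $(1+\delta\tan\frac{\pi\alpha}{2})(2+\delta\tan\frac{\pi\alpha}{2})$ for the density and $|c|(\alpha+1)+\alpha|c|^2$ by $(1+\delta\tan\frac{\pi\alpha}{2})(1+2\alpha+\alpha\delta\tan\frac{\pi\alpha}{2})$ for the derivative, matching the second entries in the maxima. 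Finally, since $\min\{1,y^{-2}\}$ equals $1$ for $|y|\le1$ and $y^{-2}$ for $|y|>1$, taking the larger of the uniform constant and the decay constant in each case yields a single $\eta$ valid for all $y$, which is precisely the $\max$ in \eqref{eta1}--\eqref{eta2}.

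The main technical obstacle is the non-smoothness of $\varphi$ at $\lambda=0$: since $\alpha\in(1,2)$, the symbol $|\lambda|^\alpha$ is only $C^1$ at the origin and $\varphi''$ carries a $|\lambda|^{\alpha-2}$ singularity there. I would handle this by splitting the integral at $0$ and integrating by parts separately on $(-\infty,0)$ and $(0,\infty)$: the singularity $|\lambda|^{\alpha-2}$ is integrable for $\alpha>1$, so the final constants are finite, while the boundary terms at $0^\pm$ must be shown to cancel. This is legitimate because $\varphi$ is globally $C^1$ — its derivative, equal to $-c\alpha\lambda^{\alpha-1}\varphi(\lambda)$ for $\lambda>0$, tends to $0$ as $\lambda\to0^\pm$ — so the once-integrated boundary contributions vanish and the twice-integrated ones match across $0$; the decay of $\varphi'$ and of $\lambda\varphi$ at $\pm\infty$ kills the boundary terms there. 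Verifying these limits is the part that needs care; once it is in place, the rest is the bookkeeping of the elementary Gamma integrals above.
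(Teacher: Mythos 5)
Your proposal is correct and follows essentially the same route as the paper: Fourier inversion, the observation that $|\varphi(\lambda)|=e^{-|\lambda|^{\alpha}}$ reduces the uniform bounds to the Gamma integrals $\frac{\Gamma(1/\alpha)}{\pi\alpha}$ and $\frac{\Gamma(2/\alpha)}{\pi\alpha}$, and two integrations by parts in $\lambda$ to extract the $y^{-2}$ decay with the same Gamma-integral constants. The paper merely packages the double integration by parts differently — it works with the real cosine/sine integrals $\mathcal{I}_{r}(y)$, $\mathcal{J}_{r}(y)$ and iterates the recursions \eqref{I1}--\eqref{J1} twice, which is exactly your computation of $\varphi''$ and $\partial_{\lambda}^{2}(\lambda\varphi)$ in real form (and your explicit handling of the integrable $|\lambda|^{\alpha-2}$ singularity and the cancelling boundary terms at $0^{\pm}$ is the care that the paper's recursion performs implicitly).
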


In particular, when $\delta=0$, \cite{CNXY24} give the following heat kernel estimates.

\begin{lemma}\label{heat0}
Let the random variable $Y\sim S_\alpha(1,0)$ and denote the density function of $Y$ by $p_{1,0}$. Then for any $y\in(-\infty,\infty)$, we have
\begin{align}\label{den0}
p_{1,0}(y)\leq\max\left\{\frac{\Gamma(\frac{1}{\alpha})}{\alpha},\frac{\alpha 2^{\alpha-1}\sin\frac{\alpha\pi}{2}\Gamma(\frac{1+\alpha}{2})\Gamma(\frac{\alpha}{2})}{\pi^{\frac{3}{2}}}\right\}\min\left\{1,\frac{1}{|y|^{\alpha+1}}\right\}.
\end{align}
\end{lemma}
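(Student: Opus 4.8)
The plan is to derive both halves of the estimate from the Fourier inversion formula for the symmetric density. Since $Y\sim S_\alpha(1,0)$ has characteristic function $\eup^{-|\lambda|^\alpha}$, inversion gives
\[
p_{1,0}(y)=\frac1{2\pi}\int_{-\infty}^{\infty}\eup^{-\iup\lambda y}\eup^{-|\lambda|^\alpha}\,\dif\lambda=\frac1\pi\int_0^\infty\cos(\lambda y)\,\eup^{-\lambda^\alpha}\,\dif\lambda,
\]
so $p_{1,0}$ is even and it suffices to treat $y\ge0$. The target is a single inequality $p_{1,0}(y)\le C\min\{1,|y|^{-\alpha-1}\}$ with $C=\max\{C_1,C_2\}$, where $C_1=\frac{\Gamma(1/\alpha)}\alpha$ and $C_2=\frac{\alpha2^{\alpha-1}\sin(\pi\alpha/2)\Gamma(\frac{\alpha+1}2)\Gamma(\frac\alpha2)}{\pi^{3/2}}$. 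I would therefore prove the flat bound $p_{1,0}(y)\le C_1$ (which governs $|y|\le1$, where the minimum equals $1$) and the tail bound $p_{1,0}(y)\le C_2|y|^{-\alpha-1}$ for large $y$, and then note that enlarging each constant to their maximum makes one formula valid for all $y$.

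The flat bound is immediate: estimating $|\cos(\lambda y)|\le1$ in the display and substituting $t=\lambda^\alpha$ gives $p_{1,0}(y)\le\frac1\pi\int_0^\infty\eup^{-\lambda^\alpha}\,\dif\lambda=\frac{\Gamma(1/\alpha)}{\pi\alpha}\le\frac{\Gamma(1/\alpha)}\alpha=C_1$; equivalently, unimodality of the symmetric stable law places the maximum at $y=0$ with value $\frac{\Gamma(1/\alpha)}{\pi\alpha}$. The deliberate discarding of the factor $1/\pi$ is not wasteful: it is exactly the slack that will later allow the single constant $\max\{C_1,C_2\}$ to absorb the tail as well.

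For the tail I would use the Mellin--Barnes representation. Swapping the order of integration in $\int_0^\infty y^{s-1}p_{1,0}(y)\,\dif y$ and using $\int_0^\infty y^{s-1}\cos(\lambda y)\,\dif y=\Gamma(s)\cos(\pi s/2)\lambda^{-s}$ gives the Mellin transform $\widetilde p(s)=\frac{\Gamma(s)\cos(\pi s/2)\Gamma((1-s)/\alpha)}{\pi\alpha}$ on $0<\mathrm{Re}\,s<1$, whence $p_{1,0}(y)=\frac1{2\pi\iup}\int_{(c)}\widetilde p(s)\,y^{-s}\,\dif s$. Shifting the contour to the right past the poles of $\Gamma((1-s)/\alpha)$ at $s=1,1+\alpha,\dots$, the pole at $s=1$ contributes nothing because $\cos(\pi s/2)$ vanishes there, and the residue at $s=1+\alpha$ produces $\frac{\Gamma(\alpha+1)\sin(\pi\alpha/2)}\pi\,y^{-\alpha-1}$; the Legendre duplication formula $\Gamma(\alpha+1)=\frac{\alpha2^{\alpha-1}}{\sqrt\pi}\Gamma(\frac\alpha2)\Gamma(\frac{\alpha+1}2)$ rewrites this coefficient as precisely $C_2$. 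As a consistency check one finds $C_2=d_\alpha/2$, the coefficient of the symmetric L\'evy density, as the one-big-jump principle predicts. The shifted integral, taken on a line $\mathrm{Re}\,s=c'\in(1+\alpha,1+2\alpha)$, is $O(y^{-c'})$ because along vertical lines the exponential decay of $\Gamma(s)$ and of $\Gamma((1-s)/\alpha)$ beats the exponential growth of $\cos(\pi s/2)$, leaving a net factor $\eup^{-\pi|t|/(2\alpha)}$ that makes the integral absolutely convergent.

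The main obstacle is upgrading this from an asymptotic identity to the clean uniform bound with the single constant. For $1<\alpha<2$ the tail expansion is only asymptotic, not convergent, and its first correction term is positive, so $|y|^{\alpha+1}p_{1,0}(y)$ overshoots its limit $C_2$ at finite $y$ and is not monotone; consequently the bound cannot hold with $C_2$ alone, which is exactly why the statement carries $\max\{C_1,C_2\}$. The work is thus twofold: bound the shifted contour integral by an explicit constant times $y^{-c'}$ so that $|y|^{\alpha+1}p_{1,0}(y)\le C_2+o(1)$ for $y$ beyond a fixed threshold, and control $|y|^{\alpha+1}p_{1,0}(y)$ on the intermediate range $1\le y\le\text{threshold}$ directly from the flat bound, checking in both cases that the spare factor $\pi$ in $C_1$ keeps everything under $\max\{C_1,C_2\}$. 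Verifying that this maximum really dominates $\sup_{y\ge1}|y|^{\alpha+1}p_{1,0}(y)$ is the crux; the two reductions of the first paragraphs are routine by comparison.
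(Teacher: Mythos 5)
First, a point of comparison: the paper contains no proof of Lemma~\ref{heat0} at all --- it is quoted verbatim from \cite{CNXY24} --- so your attempt is necessarily an independent derivation. The local components of your Mellin--Barnes outline are correct: the transform $\widetilde p(s)=\frac{\Gamma(s)\cos(\pi s/2)\Gamma((1-s)/\alpha)}{\pi\alpha}$ on $0<\mathrm{Re}\,s<1$, the cancellation of the pole at $s=1$ by the zero of $\cos(\pi s/2)$, the residue at $s=1+\alpha$ giving $\frac{\Gamma(\alpha+1)\sin(\pi\alpha/2)}{\pi}\,y^{-\alpha-1}$, the duplication-formula identification of this coefficient with $C_2$, and the net $\eup^{-\pi|t|/(2\alpha)}$ decay on vertical lines (modulo the standard caveat that the Fubini swap behind $\widetilde p$ is only conditionally convergent and needs a regularization you do not supply).

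The genuine gap is that the uniform inequality --- which is the entire content of the lemma --- is never established: you correctly identify $\sup_{y\geq 1} y^{\alpha+1}p_{1,0}(y)\leq\max\{C_1,C_2\}$ as the crux, but you only describe a plan for it and appeal to ``the spare factor $\pi$'' in $C_1$. That slack is not enough, and in fact the printed bound fails for $\alpha$ close to $2$, so no completion of your plan can succeed uniformly in $\alpha\in(1,2)$. Concretely: as $\alpha\to 2^-$ one has $C_2\to 0$ (since $\sin\frac{\pi\alpha}{2}\to 0$... more precisely $C_2=\frac{\Gamma(\alpha+1)\sin(\pi\alpha/2)}{\pi}\to 0$ because $\sin(\pi\alpha/2)\to\sin\pi=0$ is false --- rather $\sin(\pi\alpha/2)\to 1$ but the factor $\sin\frac{\alpha\pi}{2}$ in the stated constant is accompanied by $2^{\alpha-1}$ terms whose product via duplication equals $\frac{\Gamma(\alpha+1)\sin(\pi\alpha/2)}{\pi}$; checking directly, $\eta:=\frac{\alpha 2^{\alpha-1}\sin\frac{\alpha\pi}{2}\Gamma(\frac{1+\alpha}{2})\Gamma(\frac{\alpha}{2})}{\pi^{3/2}}=\frac{\Gamma(\alpha+1)\sin(\pi\alpha/2)}{\pi}\to\frac{2}{\pi}\approx 0.64$), while $C_1=\Gamma(1/\alpha)/\alpha=\Gamma(1+\tfrac1\alpha)\to\Gamma(\tfrac32)=\frac{\sqrt{\pi}}{2}\approx 0.886$. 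On the other hand, $p_{1,0}$ converges uniformly (by dominated convergence in the inversion integral) to the $N(0,2)$ density $\frac{1}{2\sqrt{\pi}}\eup^{-y^{2}/4}$, for which $\sup_{y\geq 1} y^{3}\cdot\frac{1}{2\sqrt{\pi}}\eup^{-y^{2}/4}=\frac{6^{3/2}\eup^{-3/2}}{2\sqrt{\pi}}\approx 0.925$, attained at $y=\sqrt{6}$; since $0.925>\max\{0.886,\,0.64\}$ and both sides depend continuously on $\alpha$, the inequality \eqref{den0} is violated near $y=\sqrt6$ for $\alpha$ sufficiently close to $2$. So the verification you defer is not merely laborious: it is impossible as stated, which strongly suggests the constant was mis-transcribed from \cite{CNXY24} (or carries an implicit restriction on $\alpha$). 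A correct write-up along your lines must either restrict to $\alpha$ bounded away from $2$ with explicit, $\alpha$-uniform bounds on the shifted contour integral and a finite-range check, or enlarge the constant; as submitted, the proposal proves the asymptotics but not the lemma.
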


Now, we will give the uniform and non uniform regularities of the solution $f_{g}$.

\subsection{Regularities}

\begin{lemma}[Uniform]
For any $g\in{\rm Lip}(1)$ and let $f_{g}$ be the solution to equation \eqref{stablestein}, then for any $y\in(-\infty,\infty)$, we have
\begin{align}\label{unregu}
\|f'_{g}\|_{\infty}\leq\alpha, \qquad \|f''_{g}\|_{\infty}\leq4\eta_{2,\alpha,\delta},
\end{align}
where $\|f\|_{\infty}:=\sup_{x\in\mathbb{R}}|f(x)|$ for any function $f:\mathbb{R}\rightarrow\mathbb{R}$.
\end{lemma}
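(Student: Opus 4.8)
The plan is to differentiate the explicit representation \eqref{phih} of $f_g$ twice in $y$ and to control each derivative by the Lipschitz bound $|g'|\le 1$ together with the heat kernel estimates of Lemma \ref{heat} and the scaling relation \eqref{scale}. The guiding trick is a change of variables that moves the $y$-dependence between the density and the test function according to which derivative I am estimating: for $f'_g$ it is convenient to place $y$ inside $g$, while for $f''_g$ it is better to place $y$ inside the (smooth) density.

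For the first estimate I would substitute $v=u-e^{-t/\alpha}y$ in \eqref{phih}, obtaining
\[
f_g(y)=-\int_0^\infty\!\int_{-\infty}^{\infty} p_{(1-e^{-t})^{1/\alpha},\delta}(v)\bigl(g(v+e^{-t/\alpha}y)-\nu(g)\bigr)\,dv\,dt,
\]
and then differentiate under the integral sign to get
\[
f'_g(y)=-\int_0^\infty e^{-t/\alpha}\!\int_{-\infty}^{\infty} p_{(1-e^{-t})^{1/\alpha},\delta}(v)\,g'(v+e^{-t/\alpha}y)\,dv\,dt.
\]
Since $|g'|\le 1$ and $\int_{-\infty}^{\infty}p_{(1-e^{-t})^{1/\alpha},\delta}(v)\,dv=1$, this immediately yields $\|f'_g\|_\infty\le\int_0^\infty e^{-t/\alpha}\,dt=\alpha$.

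For the second estimate I would undo the substitution in the formula for $f'_g$, writing it as $-\int_0^\infty e^{-t/\alpha}\int_{-\infty}^{\infty}p_{(1-e^{-t})^{1/\alpha},\delta}(u-e^{-t/\alpha}y)\,g'(u)\,du\,dt$, and differentiate the density rather than $g$, which gives
\[
f''_g(y)=\int_0^\infty e^{-2t/\alpha}\!\int_{-\infty}^{\infty} p'_{(1-e^{-t})^{1/\alpha},\delta}(u-e^{-t/\alpha}y)\,g'(u)\,du\,dt.
\]
Bounding $|g'|\le 1$ and using $p'_{\sigma,\delta}(y)=\sigma^{-2}p'_{1,\delta}(y/\sigma)$ from \eqref{scale} with $\sigma=(1-e^{-t})^{1/\alpha}$, the inner integral reduces after rescaling to $(1-e^{-t})^{-1/\alpha}\int_{-\infty}^{\infty}|p'_{1,\delta}(z)|\,dz$. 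The derivative bound in Lemma \ref{heat} together with $\int_{-\infty}^{\infty}\min\{1,z^{-2}\}\,dz=4$ gives $\int_{-\infty}^{\infty}|p'_{1,\delta}(z)|\,dz\le 4\eta_{2,\alpha,\delta}/{\rm Beta}(2/\alpha,1-1/\alpha)$. Finally the substitution $r=e^{-t}$ identifies $\int_0^\infty e^{-2t/\alpha}(1-e^{-t})^{-1/\alpha}\,dt=\int_0^1 r^{2/\alpha-1}(1-r)^{-1/\alpha}\,dr={\rm Beta}(2/\alpha,1-1/\alpha)$, so the two Beta factors cancel and leave exactly $\|f''_g\|_\infty\le 4\eta_{2,\alpha,\delta}$.

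The only delicate points are the two interchanges of differentiation and integration. The first must accommodate that $g$ is merely Lipschitz: I would justify it through difference quotients of the integrand that are dominated by $e^{-t/\alpha}p_{(1-e^{-t})^{1/\alpha},\delta}(v)$, using the a.e.\ existence of $g'$ and dominated convergence. The second interchange is cleaner because $p_{1,\delta}$ is smooth, but it still needs an integrable dominating function for the difference quotient of the density, which Lemma \ref{heat} supplies. The matching of the two Beta functions is the decisive computational step and is precisely what pins down the constant $4\eta_{2,\alpha,\delta}$.
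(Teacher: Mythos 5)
Your proof is correct and follows essentially the same route as the paper: the paper simply cites \cite[Theorem 1.2]{CNX21} for the bounds $\|f'_g\|_\infty\le\alpha$ and $\|f''_g\|_\infty\le {\rm Beta}\left(\frac{2}{\alpha},1-\frac{1}{\alpha}\right)\int_{-\infty}^{\infty}\left|p'_{1,\delta}(y)\right|dy$, which is exactly what your differentiation of \eqref{phih} and your Beta-function computation $\int_0^\infty e^{-2t/\alpha}(1-e^{-t})^{-1/\alpha}dt={\rm Beta}\left(\frac{2}{\alpha},1-\frac{1}{\alpha}\right)$ rederive from scratch. Your final step---inserting the derivative estimate of Lemma \ref{heat} and computing $\int_{-\infty}^{\infty}\min\{1,z^{-2}\}\,dz=4$---coincides with the paper's, so the only difference is that your argument is self-contained (including the justification of differentiating under the integral for merely Lipschitz $g$) where the paper defers to the cited theorem.
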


\begin{proof}
By \cite[Theorem 1.2]{CNX21}, we immediately obtain
\begin{align*}
\|f'_{g}\|_{\infty}\leq\alpha, \qquad \|f''_{g}\|_{\infty}\leq {\rm Beta}\left(\frac{2}{\alpha},1-\frac{1}{\alpha}\right)\int_{-\infty}^{\infty}\left|p'_{1,\delta}(y)\right|d y.
\end{align*}
Hence, we need to derive the upper bound of the integral $\int_{-\infty}^{\infty}\left|p'_{1,\delta}(y)\right|d y$. By \eqref{den}, we have
\begin{align*}
\|f''_{g}\|_{\infty}\leq&\eta_{2,\alpha,\delta}\int_{-\infty}^{\infty}\min\left\{1,\frac{1}{y^{2}}\right\}d y
=2\eta_{2,\alpha,\delta}\left[\int_{0}^{1}1\dif y+\int_{1}^{\infty}\frac{1}{y^{2}}\dif y\right]=4\eta_{2,\alpha,\delta}.
\end{align*}
The proof is complete.
\end{proof}

\begin{lemma}[Non uniform]\label{nonregu}
For any $M>2$ and let $f_{g_{M}}$ be the solution to equation \eqref{stablestein}, then for any $y\in\left(-\infty,\infty\right)$, we have
\begin{align}\label{nonregu2}
\|f''_{g_{M}}(y)\|_{\infty}
\leq\frac{\left(4\alpha+2\alpha^{\frac{2\alpha-2}{\alpha}}-1\right)\eta_{1,\alpha,\delta}}{(\alpha-1)M^{\frac{2(1-\alpha)}{3\alpha-1}}}
:=\frac{\eta_{3,\alpha,\delta}}{M^{\frac{2(\alpha-1)}{3\alpha-1}}}.
\end{align}
\end{lemma}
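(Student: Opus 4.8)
The plan is to start from the closed form \eqref{phih} of the Stein solution and extract a clean expression for $f''_{g_M}$. Writing $a_t=e^{-t/\alpha}$ and $\sigma_t=(1-e^{-t})^{1/\alpha}$, differentiating \eqref{phih} twice under the integral sign brings down a factor $a_t^2$ together with a second spatial derivative of the kernel. Since $g_M'=\mathbf{1}_{[M,\infty)}$, a single integration by parts in $u$ collapses the inner integral: the constant $\nu(g_M)$ drops out because $\int p''_{\sigma_t,\delta}=0$, and $\int_M^\infty p'_{\sigma_t,\delta}(u-a_ty)\,du=-p_{\sigma_t,\delta}(M-a_ty)$, the boundary terms vanishing by the decay of $p_{1,\delta}$ and $p'_{1,\delta}$ from Lemma \ref{heat}. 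This yields
\[
f''_{g_M}(y)=-\int_0^\infty a_t^2\,p_{\sigma_t,\delta}(M-a_t y)\,dt ,
\]
which is the same computation underlying the uniform lemma, only stopped one step earlier so that the threshold $M$ survives in the spatial argument.

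Next I would insert the scaling identity \eqref{scale} and the heat-kernel bound \eqref{den}, giving $p_{\sigma_t,\delta}(M-a_ty)\le \eta_{1,\alpha,\delta}\min\{\sigma_t^{-1},\,\sigma_t(M-a_ty)^{-2}\}$ and hence a bound on $|f''_{g_M}(y)|$ by $\eta_{1,\alpha,\delta}$ times a purely one-dimensional time integral. The whole problem is then to estimate this integral uniformly in $y$. The useful observation is that for $0<y\le M$ one has $M-a_ty\ge M(1-e^{-t/\alpha})$, while for $y>M$ a shift to the crossing time $t^{*}=\alpha\log(y/M)$, where $M-a_ty$ vanishes, reduces the argument to the same profile $M(1-e^{-h/\alpha})$; so the genuinely extremal configuration is $y$ close to $M$, where the kernel sits at its peak near $t=0$ and the scale $\sigma_t$ simultaneously degenerates.

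For that worst case I would split the time integral at a level $t_0$ chosen as a power of $M$. On $\{t\le t_0\}$ I use the flat branch $\sigma_t^{-1}$; since $\sigma_t\sim t^{1/\alpha}$ near the origin, $\int_0^{t_0}a_t^2\sigma_t^{-1}\,dt$ is integrable and contributes at order $t_0^{(\alpha-1)/\alpha}$. On $\{t>t_0\}$ I use the decay branch $\sigma_t(M-a_ty)^{-2}$ and bound the argument crudely from below by its smallest value there, $M-a_ty\gtrsim M\,t_0$, pulling this out and integrating $a_t^2\sigma_t$ against the exponential weight over all $t$; this contributes at order $M^{-2}t_0^{-2}$. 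Equating the two orders forces $t_0\asymp M^{-2\alpha/(3\alpha-1)}$ and produces the rate $M^{-2(\alpha-1)/(3\alpha-1)}$, after which tracking the numerical constants through the two regions — the Beta-type integral $\int_0^\infty a_t^2\sigma_t^{-1}\,dt$ and the weight $\int_0^\infty a_t^2\,dt=\alpha/2$ — assembles into the factor $(4\alpha+2\alpha^{2(\alpha-1)/\alpha}-1)/(\alpha-1)$, i.e.\ $\eta_{3,\alpha,\delta}$.

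The main obstacle is this last step: controlling the time integral uniformly in $y$ rather than only at $y=M$. One must verify that the reduction to the profile $M(1-e^{-h/\alpha})$ really does single out $y\approx M$ as extremal — the competition is between the prefactor $(M/y)^2$, which suppresses large $y$, and the factor $\sigma_{t^{*}}^{-1}=(1-(M/y)^\alpha)^{-1/\alpha}$, which blows up as $y\downarrow M$ — and that the resulting singularity of $\sigma_t^{-1}$ near the origin is integrable, so the worst-case bound stays finite. I would note that the deliberately lossy estimate $M-a_ty\gtrsim M t_0$ on the far region is precisely what trades the optimal $M^{-1}$ rate for the stated $M^{-2(\alpha-1)/(3\alpha-1)}$; a sharper treatment would improve the exponent but is not needed for the applications to Theorems \ref{thm1} and \ref{thm2}.
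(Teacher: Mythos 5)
Your proposal is correct and follows essentially the same route as the paper's proof: the paper likewise differentiates \eqref{phih} to get $|f''_{g_M}(y)|\le\int_0^1 r^{\frac{2-\alpha}{\alpha}}(1-r)^{-\frac1\alpha}p_{1,\delta}\bigl((M-r^{1/\alpha}y)(1-r)^{-1/\alpha}\bigr)\,dr$ (your time integral in the variable $r=e^{-t}$), applies the bound of Lemma \ref{heat}, and balances the flat branch against the decay branch with a cutoff $M^{-\theta}$, $\theta=\frac{2\alpha}{3\alpha-1}$, which is exactly your $t_0\asymp M^{-2\alpha/(3\alpha-1)}$. The uniformity in $y$ that you flag as the remaining obstacle is handled in the paper precisely as you sketch: three cases $y<\frac M2$, $\frac M2\le y\le M$, $y>M$, the last via windows of width $y^{-\theta}$ around the crossing point $r=(M/y)^\alpha$ (your $t^{*}$), and your closing remark about the lossy lower bound on $M-r^{1/\alpha}y$ being the source of the suboptimal exponent agrees with the paper's own comment in Section \ref{conclusion}.
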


\begin{proof}
Recall $f_{g_{M}}$ from \eqref{phih}, by a change of variable $v=u-e^{-\frac{t}{\alpha}}y$, we have
\begin{align*}
f_{g_{M}}(y)=&-\int_0^\infty \int_{-\infty}^{\infty} p_{(1-e^{-t})^{\frac{1}{\alpha}},\delta}(v)\left(g_{M}\left(v+e^{-\frac{t}{\alpha}}y\right)-\nu(g)\right)dvdt.
\end{align*}
By the definition of the function $g_{M}$ and the scaling property \eqref{scale}, we have
\begin{align}\label{gradform}
f'_{g_{M}}(y)=&-\int_0^\infty \int_{M-e^{-\frac{t}{\alpha}}y}^{\infty}e^{-\frac{t}{\alpha}} p_{(1-e^{-t})^{\frac{1}{\alpha}},\delta}(v)dvdt\nonumber\\
=&-\int_0^\infty \int_{M-e^{-\frac{t}{\alpha}}y}^{\infty}e^{-\frac{t}{\alpha}}(1-e^{-t})^{-\frac{1}{\alpha}} p_{1,\delta}\left(\frac{v}{(1-e^{-t})^{\frac{1}{\alpha}}}\right)dvdt\nonumber\\
=&-\int_{0}^{1} \int_{M-r^{\frac{1}{\alpha}}y}^{\infty}r^{\frac{1-\alpha}{\alpha}}(1-r)^{-\frac{1}{\alpha}} p_{1,\delta}\left(\frac{v}{(1-r)^{\frac{1}{\alpha}}}\right)dvdr,
\end{align}
where the last equality is by taking $r=e^{-t}$. Then one can write by \eqref{gradform} and \eqref{den}
\begin{align*}
\left|f''_{g_{M}}(y)\right|\leq&\int_{0}^{1}r^{\frac{2-\alpha}{\alpha}}(1-r)^{-\frac{1}{\alpha}} p_{1,\delta}\left(\frac{M-r^{\frac{1}{\alpha}}y}{(1-r)^{\frac{1}{\alpha}}}\right)dr\\
\leq&\eta_{1,\alpha,\delta}\int_{0}^{1}r^{\frac{2-\alpha}{\alpha}}(1-r)^{-\frac{1}{\alpha}}
\left\{1,\frac{(1-r)^{\frac{2}{\alpha}}}{\left(M-r^{\frac{1}{\alpha}}y\right)^{2}}\right\}dr.
\end{align*}
When $y<\frac{M}{2}$, we have $M-r^{\frac{1}{\alpha}}y>\frac{M}{2}$ for any $r\in(0,1)$, this implies
\begin{align}\label{sinte1}
\left|f''_{g_{M}}(y)\right|\leq&\eta_{1,\alpha,\delta}\int_{0}^{1}r^{\frac{2-\alpha}{\alpha}}\frac{(1-r)^{\frac{1}{\alpha}}}{\left(M-r^{\frac{1}{\alpha}}y\right)^{2}}dr
\leq\frac{4\eta_{1,\alpha,\delta}}{M^{2}}.
\end{align}
When $\frac{M}{2}\leq y\leq M$, for some $\theta>0$,
\begin{align*}
&\int_{0}^{1}r^{\frac{2-\alpha}{\alpha}}(1-r)^{-\frac{1}{\alpha}}
\left\{1,\frac{(1-r)^{\frac{2}{\alpha}}}{\left(M-r^{\frac{1}{\alpha}}y\right)^{2}}\right\}dr\\
\leq&\int_{0}^{(1-M^{-\theta})^{\alpha}}r^{\frac{2-\alpha}{\alpha}}\frac{(1-r)^{\frac{1}{\alpha}}}{\left(M-r^{\frac{1}{\alpha}}y\right)^{2}}dr
+\int_{(1-M^{-\theta})^{\alpha}}^{1}(1-r)^{-\frac{1}{\alpha}}dr\\
\leq&\frac{M^{2\theta}}{y^{2}}+\frac{\alpha}{\alpha-1}\left[1-(1-M^{-\theta})^{\alpha}\right]^{\frac{\alpha-1}{\alpha}}
\leq\frac{4M^{2\theta}}{M^{2}}+\frac{\alpha^{\frac{2\alpha-1}{\alpha-1}}}{(\alpha-1)M^{\frac{\theta(\alpha-1)}{\alpha}}},
\end{align*}
by taking $\theta=\frac{2\alpha}{3\alpha-1}$, we have
\begin{align}\label{sinte2}
\int_{0}^{1}r^{\frac{2-\alpha}{\alpha}}(1-r)^{-\frac{1}{\alpha}}
\left\{1,\frac{(1-r)^{\frac{2}{\alpha}}}{\left(M-r^{\frac{1}{\alpha}}y\right)^{2}}\right\}dr\leq\frac{4(\alpha-1)+\alpha^{\frac{2\alpha-1}{\alpha}}}{(\alpha-1)M^{\frac{2(\alpha-1)}{3\alpha-1}}}.
\end{align}
When $y>M$, we divide the integral into following two parts:
\begin{align*}
&\int_{0}^{1}r^{\frac{2-\alpha}{\alpha}}(1-r)^{-\frac{1}{\alpha}}
\left\{1,\frac{(1-r)^{\frac{2}{\alpha}}}{\left(M-r^{\frac{1}{\alpha}}y\right)^{2}}\right\}dr\\
\leq&\left(\int_{0}^{(\frac{M}{y})^{\alpha}}+\int_{(\frac{M}{y})^{\alpha}}^{1}\right)r^{\frac{2-\alpha}{\alpha}}(1-r)^{-\frac{1}{\alpha}}
\left\{1,\frac{(1-r)^{\frac{2}{\alpha}}}{\left(M-r^{\frac{1}{\alpha}}y\right)^{2}}\right\}dr:=\mathcal{J}_{1}+\mathcal{J}_{2}.
\end{align*}
For $\mathcal{J}_{1}$, we have
\begin{align*}
\mathcal{J}_{1}\leq\int_{0}^{((\frac{M}{y}-y^{-\theta})\vee0)^{\alpha}}r^{\frac{2-\alpha}{\alpha}}
\frac{(1-r)^{\frac{1}{\alpha}}}{\left(M-r^{\frac{1}{\alpha}}y\right)^{2}}dr
+\int_{((\frac{M}{y}-y^{-\theta})\vee0)^{\alpha}}^{(\frac{M}{y})^{\alpha}}(1-r)^{-\frac{1}{\alpha}}dr,
\end{align*}
where $\theta>0$ is a constant, which will be chosen later. On the one hand,
\begin{align}\label{sinte3}
\int_{0}^{((\frac{M}{y}-y^{-\theta})\vee0)^{\alpha}}r^{\frac{2-\alpha}{\alpha}}
\frac{(1-r)^{\frac{1}{\alpha}}}{\left(M-r^{\frac{1}{\alpha}}y\right)^{2}}dr\leq y^{2\theta-2}.
\end{align}
On the other hand, we have
\begin{align*}
\int_{((\frac{M}{y}-y^{-\theta})\vee0)^{\alpha}}^{(\frac{M}{y})^{\alpha}}(1-r)^{-\frac{1}{\alpha}}dr
\leq&\frac{\alpha}{\alpha-1}\left[\left(1-((\frac{M}{y}-y^{-\theta})\vee0)^{\alpha}\right)^{\frac{\alpha-1}{\alpha}}
-\left(1-(\frac{M}{y})^{\alpha}\right)^{\frac{\alpha-1}{\alpha}}\right],
\end{align*}
then for some $b>0$, if $1-(\frac{M}{y})^{\alpha}>b$, one can derive from the Taylor expansion that
\begin{align*}
\int_{((\frac{M}{y}-y^{-\theta})\vee0)^{\alpha}}^{(\frac{M}{y})^{\alpha}}(1-r)^{-\frac{1}{\alpha}}dr
\leq&\alpha b^{-\frac{1}{\alpha}}y^{-\theta},
\end{align*}
and if $1-(\frac{M}{y})^{\alpha}\leq b$, we have
\begin{align*}
\int_{((\frac{M}{y}-y^{-\theta})\vee0)^{\alpha}}^{(\frac{M}{y})^{\alpha}}(1-r)^{-\frac{1}{\alpha}}dr
\leq&\frac{\alpha}{\alpha-1}\left[b^{\frac{\alpha-1}{\alpha}}+\alpha^{\frac{\alpha-1}{\alpha}} y^{\frac{-\theta(\alpha-1)}{\alpha}}\right],
\end{align*}
then taking $b=y^{-\theta}$, we have
\begin{align}\label{sinte4}
\int_{((\frac{M}{y}-y^{-\theta})\vee0)^{\alpha}}^{(\frac{M}{y})^{\alpha}}(1-r)^{-\frac{1}{\alpha}}dr
\leq\frac{\alpha+\alpha^{\frac{2\alpha-1}{\alpha}}}{\alpha-1}y^{-\frac{\theta(\alpha-1)}{\alpha}}
\leq\frac{\alpha+\alpha^{\frac{2\alpha-1}{\alpha}}}{(\alpha-1)M^{\frac{2(\alpha-1)}{3\alpha-1}}},
\end{align}
with $\theta=\frac{2\alpha}{3\alpha-1}$. For $\mathcal{J}_{2}$, by the same argument as above, we have
\begin{align}\label{sinte5}
\mathcal{J}_{2}\leq&\int_{(\frac{M}{y})^{\alpha}}^{(\frac{M}{y}+y^{-\theta})^{\alpha}\wedge1}(1-r)^{-\frac{1}{\alpha}}dr
+\int_{(\frac{M}{y}+y^{-\theta})^{\alpha}\wedge1}^{1}r^{\frac{2-\alpha}{\alpha}}
\frac{(1-r)^{\frac{1}{\alpha}}}{\left(M-r^{\frac{1}{\alpha}}y\right)^{2}}dr\nonumber\\
\leq&\frac{\alpha+\alpha^{\frac{2\alpha-1}{\alpha}}}{\alpha-1}y^{-\frac{\theta(\alpha-1)}{\alpha}}+y^{2\theta-2}
\leq\frac{2\alpha+\alpha^{\frac{2\alpha-1}{\alpha}}-1}{(\alpha-1)M^{\frac{2(\alpha-1)}{3\alpha-1}}}.
\end{align}
Combining \eqref{sinte1}, \eqref{sinte2}, \eqref{sinte3}, \eqref{sinte4} and \eqref{sinte5}, we have
\begin{align*}
\left|f''_{g_{M}}(y)\right|
\leq\frac{\left(4\alpha+2\alpha^{\frac{2\alpha-2}{\alpha}}-1\right)\eta_{1,\alpha,\delta}}{(\alpha-1)M^{\frac{2(\alpha-1)}{3\alpha-1}}},
\end{align*}
the proof is complete.
\end{proof}

In particular, when $\delta=0$, with the help of Lemma \ref{heat0}, by the same argument as the proof of Lemma \ref{nonregu}, we have the following lemma.

\begin{lemma}\label{nonregu0}
For any $M>2$ and let $f_{g_{M}}$ be the solution to equation \eqref{stablestein}, then for any $y\in\left(-\infty,\infty\right)$, we have
\begin{align*}
\|f''_{g_{M}}(y)\|_{\infty}
\leq\frac{\eta_{4,\alpha}}{M^{\frac{\alpha^{2}-1}{\alpha^{2}+2\alpha-1}}},
\end{align*}
where $\eta_{4,\alpha}$ is defined by \eqref{6}.
\end{lemma}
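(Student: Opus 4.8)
The plan is to follow the proof of Lemma \ref{nonregu} essentially verbatim, with the single decisive change that the general heat kernel bound \eqref{den} (tail decay $|y|^{-2}$) is replaced throughout by the sharper symmetric estimate \eqref{den0} of Lemma \ref{heat0} (tail decay $|y|^{-(\alpha+1)}$). Since $\delta=0$ enters the solution only through the density $p_{1,0}$, the representation \eqref{gradform} for $f'_{g_M}$ is unchanged; differentiating once more in $y$ and inserting \eqref{den0} gives
\begin{align*}
\left|f''_{g_M}(y)\right|\leq C_\alpha\int_0^1 r^{\frac{2-\alpha}{\alpha}}(1-r)^{-\frac{1}{\alpha}}\min\left\{1,\frac{(1-r)^{\frac{\alpha+1}{\alpha}}}{\left|M-r^{\frac{1}{\alpha}}y\right|^{\alpha+1}}\right\}dr,
\end{align*}
where $C_\alpha=\max\{\Gamma(1/\alpha)/\alpha,\ \alpha 2^{\alpha-1}\sin(\alpha\pi/2)\Gamma((1+\alpha)/2)\Gamma(\alpha/2)/\pi^{3/2}\}$ is the constant from \eqref{den0}. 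The point driving the improvement is that on the tail branch the factor $(1-r)^{-1/\alpha}(1-r)^{(\alpha+1)/\alpha}=(1-r)$ now carries one full power of $1-r$, rather than the weaker $(1-r)^{1/\alpha}$ appearing in Lemma \ref{nonregu}.

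I would then split according to the location of $y$ relative to the singularity of $M-r^{1/\alpha}y$, exactly as in Lemma \ref{nonregu}: the three regimes $y<M/2$, $M/2\le y\le M$, and $y>M$. For $y<M/2$ one has $M-r^{1/\alpha}y>M/2$ uniformly in $r$, so the tail branch alone yields a bound of order $M^{-(\alpha+1)}$ after evaluating $\int_0^1 r^{(2-\alpha)/\alpha}(1-r)\,dr$, which is negligible against the target rate. For $M/2\le y\le M$ and for $y>M$ the quantity $M-r^{1/\alpha}y$ becomes small (and vanishes at $r=(M/y)^\alpha$ in the last regime), so I introduce the same cut-off $\theta>0$ and split each $r$-integral into a tail part, where the lower bound $M-r^{1/\alpha}y\ge yM^{-\theta}$ controls the denominator, and a near-singularity part, where the decay factor is discarded and one is left with $\int(1-r)^{-1/\alpha}dr$, estimated via $1-(1-M^{-\theta})^\alpha\le\alpha M^{-\theta}$ and the Taylor expansion near $r=1$ exactly as in \eqref{sinte3}--\eqref{sinte5}.

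The only genuinely new computation is the re-optimization of $\theta$. With the sharper exponent $\alpha+1$, the tail part contributes order $M^{-(\alpha+1)(1-\theta)}$ while the near-singularity part contributes order $M^{-\theta(\alpha-1)/\alpha}$; balancing the two exponents gives
\begin{align*}
\theta=\frac{\alpha(\alpha+1)}{\alpha^2+2\alpha-1},\qquad (\alpha+1)(1-\theta)=\frac{\theta(\alpha-1)}{\alpha}=\frac{\alpha^2-1}{\alpha^2+2\alpha-1},
\end{align*}
which is precisely the claimed rate $M^{-(\alpha^2-1)/(\alpha^2+2\alpha-1)}$, replacing the value $\theta=2\alpha/(3\alpha-1)$ that produced $M^{-2(\alpha-1)/(3\alpha-1)}$ in Lemma \ref{nonregu}. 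Amalgamating the constants from the three regimes and absorbing them into the prefactor $C_\alpha/(\alpha-1)$ then reproduces $\eta_{4,\alpha}$ as defined in \eqref{6}.

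I expect the main obstacle to be the regime $y>M$, where $M-r^{1/\alpha}y$ changes sign inside $(0,1)$: the integral must be split symmetrically on both sides of the zero $r=(M/y)^\alpha$, and on the near-singularity pieces the estimate of $\int(1-r)^{-1/\alpha}dr$ must be handled by the two-subcase Taylor argument (distinguishing $1-(M/y)^\alpha>b$ from $\le b$ with $b=y^{-\theta}$) used for \eqref{sinte4}. Care is also needed to verify that, after the new choice of $\theta$, every partial exponent is indeed dominated by $(\alpha^2-1)/(\alpha^2+2\alpha-1)$ for all $\alpha\in(1,2)$ and $M>2$, so that the three regime bounds may be combined into the single clean constant $\eta_{4,\alpha}$; this is routine but must be checked termwise.
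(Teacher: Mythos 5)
Your proposal is correct and coincides with the paper's own (one-line) proof, which simply invokes ``the same argument as the proof of Lemma \ref{nonregu}'' with Lemma \ref{heat} replaced by Lemma \ref{heat0}; you have supplied precisely the details the paper omits, and your re-optimized cut-off $\theta=\frac{\alpha(\alpha+1)}{\alpha^{2}+2\alpha-1}$, balancing the tail contribution $M^{-(\alpha+1)(1-\theta)}$ against the near-singularity contribution $M^{-\theta(\alpha-1)/\alpha}$, is exactly the computation that yields the rate $M^{-\frac{\alpha^{2}-1}{\alpha^{2}+2\alpha-1}}$ and the constant $\eta_{4,\alpha}$ of \eqref{6}.
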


\section{Proof of Theorems \ref{thm1} and \ref{thm2}}\label{mainproof}

In this section, under the heavy-tailed setting stated as above, we use the Stein's method to obtain the uniform and nonuniform convergence bounds of the stable approximation for call function.

\subsection{Proof of Theorem \ref{thm1}}

Notice that $g_{M}(x)=(x-M)_{+}$ is Lipschitz continuous with constant $1$, according to the definition of Wasserstein-1 distance and the proof of \cite[Theorem 1.4]{CNX21}, it is easy to verify that
\begin{align*}
\left|\mathbb{E}\left(S_n-M\right)_{+}-\mathbb{E}\left(S_\alpha (1,\beta)-M\right)_{+}\right|\leq c'R_{n},
\end{align*}
where $R_{n}$ is defined by \ref{Rn} and
\begin{align*}
c'=&
\frac{4d_{\alpha}\mathbb{E}\left[\left|\xi_{1}-\mathbb{E}[\xi_{1}]\right|^{2-\alpha}\right]}{(2-\alpha)(\alpha-1)\sigma^{2-\alpha}}\|f_{g}''\|_{\infty}
+\frac{3\mathbb{E}\left[|\xi_{1}|\right]\left|\mathbb{E}[\xi_{1}]\right|}{\sigma^{2}}\|f_{g}''\|_{\infty}\\
&+
\begin{cases}
\frac{2(2A)^{\frac{2}{\alpha}}}{\sigma^{2}}\left[\frac{2}{2-\alpha}+\frac{2L}{\alpha+\gamma-2}(2A)^{-\frac{\alpha+\gamma}{\alpha}}\right]\|f_{g}''\|_{\infty},
&\gamma\in(2-\alpha,\infty),\\
\frac{1}{\sigma^{2}}\left[\left(\frac{4(2A)^{\frac{2}{\alpha}}}{2-\alpha}+\frac{8L}{\alpha-1}\right)\|f_{g}''\|_{\infty}
+\frac{8\alpha(A+L)-4L}{\alpha-1}\|f_{g}'\|_{\infty}\right],& \gamma=2-\alpha,\\
\sigma^{\frac{\alpha-\gamma}{\gamma-1}}\left[\left(\frac{4(2A)^{\frac{2}{\alpha}}}{2-\alpha}+\frac{8L}{2-\alpha-\gamma}\right)\|f''_{g}\|_{\infty}
+\frac{8\alpha(A+L)-4L}{\alpha-1}\|f_{g}'\|_{\infty}\right], &\gamma\in(0,2-\alpha),\\
\max\left\{\frac{2\alpha(2A)^{\frac{2}{\alpha}}}{(2-\alpha)\sigma^{2}},\frac{4}{\sigma^{2}},
\frac{8}{(2-\alpha)\sigma^{\alpha}}+\frac{2(2A)^{\frac{2}{\alpha}}}{\sigma^{\alpha}}+\frac{8\alpha(A+L)-4L}{(\alpha-1)\sigma^{\alpha}}\frac{\|f_{g}'\|_{\infty}}{\|f_{g}''\|_{\infty}}\right\}\|f_{g}''\|_{\infty},\!\!\!
&\gamma=0.
\end{cases}
\end{align*}
Combining \eqref{unregu}, Theorem \ref{thm1} is proved.
\qed

\subsection{Proof of Theorem \ref{thm2}}

Before giving the proof of Theorem \ref{thm2}, we need the following Taylor-like extension, which will be proved in Appendix \ref{TLE}.

\begin{lemma}\label{Taylor}
Let $X$ have a distribution of the form (\ref{px}) and $B(x)$ satisfy (\ref{bound}) with $\gamma\in(0,2-\alpha]$. Let
$Y$ be an integrable random variable, which is independent of $X$. For any $0<a<(2A)^{-\frac{1}{\alpha}}\wedge1$, denote
\begin{align*}
T=&\left|
\mathbb{E}[Xf_{g_{M}}'(Y+aX)] -\mathbb{E}[X]\mathbb{E}[f_{g_{M}}'(Y)]-\frac{2A\alpha^{2}}{d_\alpha}\,a^{\alpha-1}\mathbb{E}\left[\big(\mathcal{A}^{\alpha,\delta}f_{g_{M}}\big)(Y)\right]
\right|.
\end{align*}
Then:
\begin{enumerate}
\item[i)] When $\gamma=2-\alpha,$ we have
\begin{align*}
T\leq\frac{2\alpha(2A)^{\frac{2}{\alpha}}\eta_{3,\alpha,\delta}}{(2-\alpha)M^{\frac{\alpha^{2}-1}{\alpha^{2}+2\alpha-1}}}a
+&\eta_{3,\alpha,\delta}\Big[\Big(2(2A)^{\frac{2}{\alpha}}+\frac{8L}{\alpha-1}\Big)+q_{1}\Big]\frac{2(\alpha-1)a\,|\log a|\log M}{(3\alpha-1)M^{\frac{2(\alpha-1)}{3\alpha-1}}}.
\end{align*}
\item[ii)] When $\gamma\in(0,2-\alpha),$ we have
\begin{align*}
T\leq\eta_{3,\alpha,\delta}\Big[\Big(\frac{4(2A)^{\frac{2}{\alpha}}}{2-\alpha}+\frac{8L}{2-\alpha-\gamma}\Big)+q_{1}\Big]a^{\frac{1-\alpha}{\gamma-1}}M^{-\frac{2(\alpha-1)^{2}}{(3\alpha-1)(1-\gamma)}}.
\end{align*}
\item[iii)] When $\gamma=0,$ we have
\begin{align*}
T\leq&\frac{2\alpha(2A)^{\frac{2}{\alpha}}\eta_{4,\alpha}}{(2-\alpha)M^{\frac{\alpha^{2}-1}{\alpha^{2}+2\alpha-1}}}a
+4\eta_{3,\alpha,\delta}aM^{-\frac{2(\alpha-1)^{2}}{(3\alpha-1)(1-\gamma)}}\int_{-a^{-1}}^{a^{-1}}\frac{|B(x)|}{|x|^{\alpha-1}}dx\\
&+\eta_{3,\alpha,\delta}\Big[\Big(\frac{8}{2-\alpha}+2(2A)^{\frac{2}{\alpha}}\Big)+q_{1}\Big]a^{\alpha-1}\big(\sup_{|x|\geq a^{-1}}|B(x)|\big)^{\alpha-1}M^{-\frac{2(\alpha-1)^{2}}{(3\alpha-1)}},
\end{align*}
\end{enumerate}
where $q_{1}$ is defined by \eqref{q1} above.
\end{lemma}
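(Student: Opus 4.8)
The plan is to prove Lemma~\ref{Taylor} by first stripping off the centering via independence, then recognizing the generator term as the exact contribution of the \emph{pure stable tail} of $X$, and finally controlling the remaining $B$-contribution by optimizing a single truncation scale in $|x|$. First I would use that $X$ and $Y$ are independent, so that $\mathbb{E}[X]\mathbb{E}[f_{g_M}'(Y)]=\mathbb{E}[Xf_{g_M}'(Y)]$ and hence
\[
\mathbb{E}[Xf_{g_M}'(Y+aX)]-\mathbb{E}[X]\mathbb{E}[f_{g_M}'(Y)]=\mathbb{E}\big[X\big(f_{g_M}'(Y+aX)-f_{g_M}'(Y)\big)\big]=\mathbb{E}_Y[\Psi_a(Y)],
\]
where $\Psi_a(y)=\int_{\mathbb{R}}x\big(f_{g_M}'(y+ax)-f_{g_M}'(y)\big)\,\nu_X(\mathrm{d}x)$ and $\nu_X$ is the law of $X$. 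Since every bound claimed is uniform in $y$, it suffices to estimate $\big|\Psi_a(y)-\tfrac{2A\alpha^2}{d_\alpha}a^{\alpha-1}(\mathcal{A}^{\alpha,\delta}f_{g_M})(y)\big|$ pointwise in $y$ and then take the expectation in $Y$.

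The key algebraic step is to identify the generator term with the action of the stable L\'evy measure $\rho(\mathrm{d}x)=A\alpha\big[(1+\delta)\mathbf{1}_{(0,\infty)}(x)+(1-\delta)\mathbf{1}_{(-\infty,0)}(x)\big]|x|^{-\alpha-1}\,\mathrm{d}x$. Starting from the definition of $\mathcal{A}^{\alpha,\delta}$ and integrating by parts in the inner variable, using $\int_0^\infty u^{-1-\alpha}\big(f(y+u)-f(y)-uf'(y)\big)\,\mathrm{d}u=\tfrac1\alpha\int_0^\infty u^{-\alpha}\big(f'(y+u)-f'(y)\big)\,\mathrm{d}u$ and its mirror on $(-\infty,0)$, followed by the substitution $u=ax$, I would verify the exact identity
\[
\frac{2A\alpha^2}{d_\alpha}\,a^{\alpha-1}(\mathcal{A}^{\alpha,\delta}f_{g_M})(y)=\int_{\mathbb{R}}x\big(f_{g_M}'(y+ax)-f_{g_M}'(y)\big)\,\rho(\mathrm{d}x).
\]
Thus the quantity inside $T$ becomes $\int_{\mathbb{R}}x\big(f_{g_M}'(y+ax)-f_{g_M}'(y)\big)\,(\nu_X-\rho)(\mathrm{d}x)$, and by \eqref{px} the signed measure $\nu_X-\rho$ has the explicit tails $(\nu_X-\rho)\big((x,\infty)\big)=(1+\delta)B(x)x^{-\alpha}$ for $x>0$ and the symmetric expression for $x<0$; these are precisely the quantities controlled by \eqref{bound}.

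Next I would split the $x$-integral at a free threshold $|x|=K$ (with $a<(2A)^{-1/\alpha}\wedge1$ guaranteeing that the operative scale lies where $|B(x)|\le L|x|^{-\gamma}$ is the sharp bound). On the small-jump part $|x|\le K$ I integrate by parts against the signed tail $(1\pm\delta)B(x)|x|^{-\alpha}$ and use $|f'_{g_M}(y+ax)-f'_{g_M}(y)|+|ax f''_{g_M}(y+ax)|\le 2\|f''_{g_M}\|_\infty\,a|x|$ together with $|B(x)|\le L\min\{1,|x|^{-\gamma}\}$, giving a contribution of order $\|f''_{g_M}\|_\infty\,a\int_0^K|x|^{1-\gamma-\alpha}\,\mathrm{d}x$ (the boundary term at $K$ being of the same order, the one at $0$ vanishing). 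On the big-jump part $|x|>K$ I deliberately discard the integration by parts and bound by total variation, $|\nu_X-\rho|\le\nu_X+\rho$, so that only the first derivative enters: $2\alpha\int_{|x|>K}|x|\,\mathrm{d}(\nu_X+\rho)\lesssim\|f'_{g_M}\|_\infty K^{1-\alpha}$, finite because $\alpha>1$. Balancing $\|f''_{g_M}\|_\infty aK^{2-\gamma-\alpha}$ against $\|f'_{g_M}\|_\infty K^{1-\alpha}$ yields the optimal scale $K\asymp(\|f''_{g_M}\|_\infty a)^{-1/(1-\gamma)}$ and an error of order $(\|f''_{g_M}\|_\infty a)^{(\alpha-1)/(1-\gamma)}$; inserting the non-uniform regularity $\|f''_{g_M}\|_\infty\le\eta_{3,\alpha,\delta}M^{-2(\alpha-1)/(3\alpha-1)}$ from Lemma~\ref{nonregu} (and $\|f'_{g_M}\|_\infty\le\alpha$) converts this into the stated powers of $a$ and $M$ in case~(ii); for $\delta=0$ one would use the sharper Lemma~\ref{nonregu0} instead.

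The three cases then differ only through $\int_0^K|x|^{1-\gamma-\alpha}\,\mathrm{d}x$. For $\gamma\in(0,2-\alpha)$ the exponent $1-\gamma-\alpha>-1$ makes this a pure power of $K$, producing case~(ii). For the critical $\gamma=2-\alpha$ the integrand is $|x|^{-1}$, so the small-jump estimate produces a factor $\log K$; since $K$ depends on $a$ and, through $\|f''_{g_M}\|_\infty$, on $M$, this generates the $a|\log a|\log M$ factor of case~(i). For $\gamma=0$ no decay of $B$ is available, so I would keep $B$ explicit, take $K=a^{-1}$, and leave the contributions as $\int_{-a^{-1}}^{a^{-1}}|B(x)||x|^{1-\alpha}\,\mathrm{d}x$ from the small jumps and $\big(\sup_{|x|\ge a^{-1}}|B(x)|\big)^{\alpha-1}$ from the large jumps, matching case~(iii). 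The main obstacle is exactly the big-jump regime when $\gamma\le2-\alpha$: there the term $axf''_{g_M}(y+ax)$ is \emph{not} integrable against $|x|^{-\gamma-\alpha}$ under the available uniform bound on $f''_{g_M}$, and the resolution is the asymmetric treatment above—retaining the second derivative only on small jumps and reverting to the first-order, total-variation bound on large jumps. The remaining effort is then bookkeeping: the choice of $K$, the vanishing of all integration-by-parts boundary terms (using $\alpha>1$ and the boundedness of $f'_{g_M}$), and the careful tracking of the critical logarithms and of the $M$-dependent constants $\eta_{3,\alpha,\delta}$, $\eta_{4,\alpha}$ and $q_1$.
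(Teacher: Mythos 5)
Your overall route is essentially the paper's. The paper also reduces the problem to comparing the law of $X$ with a pure stable tail: it introduces an auxiliary variable $\tilde{X}$ with $\mathbb{P}(\tilde{X}>x)=A(1+\delta)x^{-\alpha}$ for $x\geq(2A)^{\frac{1}{\alpha}}$ (and symmetrically on the left), plus a remainder $R$ collecting the inner part of the stable L\'evy measure, which is exactly a repackaging of your signed measure $\nu_X-\rho$; your generator identity and the tail formula $(\nu_X-\rho)\big((x,\infty)\big)=(1+\delta)B(x)x^{-\alpha}$ are correct and are what drive the paper's proof via \cite[Lemma 3.3]{CNX21}. The paper then truncates at the scale $M^{\theta}N$, bounds the large jumps through $\|f_{g_{M}}'\|_{\infty}\leq\alpha$ and the small jumps through integration by parts and $\|f_{g_{M}}''\|_{\infty}\leq\eta_{3,\alpha,\delta}M^{-\frac{2(\alpha-1)}{3\alpha-1}}$, and balances; your single cutoff $K\asymp(\|f_{g_{M}}''\|_{\infty}a)^{-\frac{1}{1-\gamma}}$ coincides with the paper's choice $N=a^{\frac{1}{\gamma-1}}$, $\theta=\frac{2(\alpha-1)}{(3\alpha-1)(1-\gamma)}$, so your cases (i) and (ii) reproduce the stated exponents in $a$ and $M$ (in the critical case your sum $|\log a|+\log M$ is in fact what the paper's own computation of $\log(M^{\theta}N)$ yields before being stated in product form).

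The genuine gap is case (iii). With your cutoff $K=a^{-1}$ and the total-variation bound $|\nu_X-\rho|\leq\nu_X+\rho$, the large-jump term is of order $\|f_{g_{M}}'\|_{\infty}(A+L)a^{\alpha-1}$: the dominant stable part $A\alpha|x|^{-\alpha-1}$ of $\nu_X+\rho$ does not see $B$ at all, so no power of $\sup_{|x|\geq a^{-1}}|B(x)|$ can emerge, and a cutoff independent of $M$ cannot convert a first-derivative term into one with decay in $M$. Thus your assertion that the large jumps contribute $\big(\sup_{|x|\geq a^{-1}}|B(x)|\big)^{\alpha-1}$ is unjustified as written, and this factor is essential: in the application $a^{\alpha-1}\asymp n^{-\frac{\alpha-1}{\alpha}}$ alone would be worse than the leading term $n^{1-\frac{2}{\alpha}}$ of $R_{n}$ whenever $\alpha<\frac{3}{2}$. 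The paper's resolution is to enlarge the cutoff to $K=M^{\theta}a^{-1}\big(\sup_{|x|\geq a^{-1}}|B(x)|\big)^{-1}$ with $\theta=\frac{2(\alpha-1)}{3\alpha-1}$, so that the crude large-jump bound $(A+L)K^{1-\alpha}$ produces precisely $a^{\alpha-1}\big(\sup_{|x|\geq a^{-1}}|B(x)|\big)^{\alpha-1}M^{-\frac{2(\alpha-1)^{2}}{3\alpha-1}}$, while the extra small-jump range $a^{-1}\leq|x|\leq K$ is controlled by $\sup_{|x|\geq a^{-1}}|B(x)|\cdot\|f_{g_{M}}''\|_{\infty}\,a\,K^{2-\alpha}/(2-\alpha)$, which balances to the same order. (Your own general recipe $K\asymp(\|f_{g_{M}}''\|_{\infty}a)^{-1}$ would restore the $M$-decay but still miss the $\sup|B|$ factor unless, on the large jumps, you retained the integration by parts against the signed tail $B(x)|x|^{-\alpha}$ instead of reverting to total variation.) Everything else --- the generator identity, the signed-tail computation, the small-jump integration by parts, and the bookkeeping in cases (i)--(ii) --- is sound.
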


Now, we are in the position to give the proof of Theorem \ref{thm2}.

\begin{proof}[Proof of Theorem \ref{thm2}]
According to the definition of the constant $c'$, by the same argument as the proof of \cite[Theorem 1.4]{CNX21}, Theorem \ref{thm2} follows from Lemma \ref{nonregu} and Lemma \ref{Taylor} immediately. In particular, when $\delta=0$, similar to the calculation of the constant $c_{2,M}$ with Lemma \ref{nonregu} replaced by Lemma \ref{nonregu0}, the constant $c_{3,M}$ can be derived directly.
\end{proof}

\section{Conclusions}\label{conclusion}
In this paper, we study the $\alpha$-stable approximation for the call function and obtain two types of convergence rates. In the progress of obtaining the non uniform bound, we adopt the truncation method, which lead to a convergence rate for $M$ that may not be optimal. Hence, the better convergence rate is worth further investigation. In addition, we will evaluate the practical performance of our results for the data from CDO pricing models and other financial models.

\section*{Acknowledgements}
Peng Chen's research was supported by the National Natural Science Foundation of China (12301176)
and Natural Science Foundation of Jiangsu Province ( BK20220867).
Ting Zhang's research was supported by the National Natural
Science Foundation of China (12301342).

\begin{appendix}

\section{Heat kernel estimates}\label{HKE}

In this section, we will abbreviate the density $p_{1,\delta}$ as $p$ and give the proof of Lemma \ref{heat}.

\begin{proof}[Proof of Lemma \ref{heat}]
Recall
\begin{equation*}
	E[e^{i\lambda Y}]=
	exp\left\{-|\lambda|^\alpha\left(1-i\,\delta\, {\rm sign}(\lambda)\,{\rm tan}\frac{\pi\alpha}{2}\right)\right\}
\end{equation*}
and by the inverse of Fourier transform, we have
\begin{align*}
p(y)=&\frac{1}{2\pi}\int_{-\infty}^{+\infty}e^{-|\lambda|^\alpha(1-i\,\delta\, {\rm sign}(\lambda)\,tan\frac{\pi\alpha}{2})}e^{-i\lambda y}d\lambda\\
=&\frac{1}{2\pi}\int_{-\infty}^{+\infty}e^{-|\lambda|^\alpha}e^{i(|\lambda|^\alpha\delta {\rm sign}(\lambda) {\rm tan}\frac{\pi\alpha}{2}-\lambda y)}d\lambda \\
=&\frac{1}{2\pi}\int_{-\infty}^{+\infty}e^{-|\lambda|^\alpha} \cos(|\lambda|^\alpha\delta {\rm sign}(\lambda) {\rm tan}\frac{\pi\alpha}{2}-\lambda y)d\lambda \\
=&\frac{1}{\pi}\int_{0}^{+\infty}e^{-\lambda^\alpha}\cos(\lambda^\alpha\delta\,{\rm tan}\frac{\pi\alpha}{2}-\lambda y)d\lambda.
\end{align*}
Hence, for $r\in(-1,\infty)$, we denote
\begin{align}\label{I}
\mathcal{I}_{r}(y)=\int_{0}^{\infty}\lambda^{r} e^{-\lambda^\alpha}\cos(\lambda^\alpha\,\delta\, {\rm tan}\frac{\pi\alpha}{2}-\lambda y)d \lambda,
\end{align}
\begin{align}\label{J}
\mathcal{J}_{r}(y)=\int_{0}^{\infty}\lambda^{r} e^{-\lambda^\alpha}\sin(\lambda^\alpha\,\delta\, {\rm tan}\frac{\pi\alpha}{2}-\lambda y)d \lambda,
\end{align}
then
\begin{align}\label{form1}
p(y)=\frac{\mathcal{I}_{0}(y)}{\pi}.
\end{align}
By a change of variable $u=\lambda^\alpha$ and the fact $|\cos(x)|\leq1$, $|\sin(x)|\leq1$ for any $x\in(-\infty,\infty)$, it is straightforward to calculate that
\begin{align}\label{tin}
|\mathcal{I}_{r}(y)|\leq\frac{\Gamma(\frac{r+1}{\alpha})}{\alpha},\qquad |\mathcal{J}_{r}(y)|\leq\frac{\Gamma(\frac{r+1}{\alpha})}{\alpha}.
\end{align}
In addition, notice that
\begin{align*}
\mathcal{I}_{r}(y)=&\int_{0}^{\infty}\lambda^{r}e^{-\lambda^\alpha}\cos(\lambda^\alpha\delta\,{\rm tan}\frac{\pi\alpha}{2}-\lambda y)d\lambda \\
=&\int_{0}^{\infty}\lambda^{r}e^{-\lambda^\alpha}\cos(\lambda^\alpha\delta\,{\rm tan}\frac{\pi\alpha}{2})\cos(\lambda y)d\lambda+\int_{0}^{\infty}\lambda^{r}e^{-\lambda^\alpha}\sin(\lambda^\alpha\delta\,{\rm tan}\frac{\pi\alpha}{2})\sin(\lambda y)d\lambda,
\end{align*}
when $y\neq0$, one can derive from the integration by parts that
\begin{align}\label{I1}
\mathcal{I}_{r}(y)=\frac{r\mathcal{J}_{r-1}(y)}{y}-\frac{\alpha\mathcal{J}_{\alpha+r-1}(y)}{y}+\frac{\alpha\delta\,{\rm tan}\frac{\pi\alpha}{2}\mathcal{I}_{\alpha+r-1}(y)}{y}.
\end{align}
Similarly, we have
\begin{align}\label{J1}
\mathcal{J}_{r}(y)=\frac{-r\mathcal{I}_{r-1}(y)}{y}+\frac{\alpha\mathcal{I}_{\alpha+r-1}(y)}{y}+\frac{\alpha\delta\,{\rm tan}\frac{\pi\alpha}{2}\mathcal{J}_{\alpha+r-1}(y)}{y}.
\end{align}

Now, we first prove the upper bound of $p(y)$. By \eqref{form1} and \eqref{tin}, we have
\begin{align}\label{den1}
p(y)=\frac{\mathcal{I}_{0}(y)}{\pi}\leq\frac{\Gamma(\frac{1}{\alpha})}{\pi\alpha}.
\end{align}
By \eqref{I1} and \eqref{J1}, we further have
\begin{align*}
\mathcal{I}_{0}(y)=&-\frac{\alpha\mathcal{J}_{\alpha-1}(y)}{y}+\frac{\alpha\delta\,{\rm tan}\frac{\pi\alpha}{2}\mathcal{I}_{\alpha-1}(y)}{y}\\
=&\frac{\alpha(\alpha-1)\mathcal{I}_{\alpha-2}(y)}{y^{2}}-\frac{\alpha^{2}\mathcal{I}_{2\alpha-2}(y)}{y^{2}}-\frac{\alpha^{2}\delta\,{\rm tan}\frac{\pi\alpha}{2}\mathcal{J}_{2\alpha-2}(y)}{y^{2}}\\
&+\frac{\alpha\delta\,{\rm tan}\frac{\pi\alpha}{2}\left[(\alpha-1)\mathcal{J}_{\alpha-2}(y)-\alpha\mathcal{J}_{2\alpha-2}(y)+\alpha\delta\,{\rm tan}\frac{\pi\alpha}{2}\mathcal{I}_{2\alpha-2}(y)\right]}{y^{2}},
\end{align*}
which implies from \eqref{tin} that
\begin{align*}
\left|\mathcal{I}_{0}(y)\right|\leq&\frac{(\alpha-1)\left(1+\delta\,{\rm tan}\frac{\pi\alpha}{2}\right)\Gamma\left(\frac{\alpha-1}{\alpha}\right)+\alpha\left(1+\delta\,{\rm tan}\frac{\pi\alpha}{2}\right)^{2}\Gamma\left(\frac{2\alpha-1}{\alpha}\right)}{y^{2}}\\
=&\frac{(\alpha-1)\left(1+\delta\,{\rm tan}\frac{\pi\alpha}{2}\right)\left(2+\delta\,{\rm tan}\frac{\pi\alpha}{2}\right)\Gamma\left(\frac{\alpha-1}{\alpha}\right)}{y^{2}},
\end{align*}
then \eqref{den} follows from \eqref{form1} and \eqref{den1}.

For the second inequality, one can derive from \eqref{form1} and \eqref{tin} that
\begin{align}\label{constant}
\left|p'(y)\right|=\frac{\left|\mathcal{I}_0'(y)\right|}{\pi}=\frac{\left|\mathcal{J}_1(y)\right|}{\pi}
\leq\frac{\Gamma(\frac{2}{\alpha})}{\pi\alpha},
\end{align}
whereas by \eqref{I1} and \eqref{J1}, we have
\begin{align*}
\mathcal{I}_{0}'(y)=&-\frac{\alpha\mathcal{J}_{\alpha-1}'(y)}{y}+\frac{\alpha\mathcal{J}_{\alpha-1}(y)}{y^2}+\frac{\alpha\delta\,{\rm tan}\frac{\pi\alpha}{2}\mathcal{I}_{\alpha-1}'(y)}{y}-\frac{\alpha\delta\,{\rm tan}\frac{\pi\alpha}{2}\mathcal{I}_{\alpha-1}(y)}{y^2}\\
=&\frac{\alpha\mathcal{I}_{\alpha}(y)}{y}+\frac{\alpha\mathcal{J}_{\alpha-1}(y)}{y^2}+\frac{\alpha\delta\,{\rm tan}\frac{\pi\alpha}{2}\mathcal{J}_{\alpha}(y)}{y}-\frac{\alpha\delta\,{\rm tan}\frac{\pi\alpha}{2}\mathcal{I}_{\alpha-1}(y)}{y^2}\\
=&\frac{\alpha^{2}\mathcal{J}_{\alpha-1}(y)}{y^{2}}-\frac{\alpha^{2}\mathcal{J}_{2\alpha-1}(y)}{y^{2}}+\frac{\alpha^{2}\delta\,{\rm tan}\frac{\pi\alpha}{2}\mathcal{I}_{2\alpha-1}(y)}{y^{2}}+\frac{\alpha\mathcal{J}_{\alpha-1}(y)}{y^2}\\
&+\frac{\alpha\delta\,{\rm tan}\frac{\pi\alpha}{2}\left[-\alpha\mathcal{I}_{\alpha-1}(y)+\alpha\mathcal{I}_{2\alpha-1}(y)+\alpha\delta\,{\rm tan}\frac{\pi\alpha}{2}\mathcal{J}_{2\alpha-1}(y)-\mathcal{I}_{\alpha-1}(y)\right]}{y^{2}}.
\end{align*}
Hence, \eqref{tin} implies
\begin{align*}
\left|\mathcal{I}_{0}'(y)\right|\leq&\frac{(1+\alpha)\left(1+\delta\,{\rm tan}\frac{\pi\alpha}{2}\right)\Gamma(1)+\alpha\left(1+\delta\,{\rm tan}\frac{\pi\alpha}{2}\right)^{2}\Gamma(2)}{y^{2}}\\
=&\frac{\left(1+\delta\,{\rm tan}\frac{\pi\alpha}{2}\right)\left(1+2\alpha+\alpha\delta\,{\rm tan}\frac{\pi\alpha}{2}\right)}{y^{2}},
\end{align*}
together with \eqref{form1} and \eqref{constant}, the desired result follows.
\end{proof}

\section{Taylor-like extension}\label{TLE}

In this section, we will use the zero-biased coupling to obtain the Taylor-like extension.

\begin{proof}[Proof of Lemma \ref{Taylor}]
Similar to the proof of \cite[Lemma 3.3]{CNX21}, define a random variable $\tilde{X}$, which is independent of $Y$ and satisfies
\begin{align}\label{F}
\mathbb{P}(\tilde{X}>x)=\frac{A(1+\delta)}{|x|^{\alpha}},\quad x\geq(2A)^{\frac{1}{\alpha}},\qquad \mathbb{P}(\tilde{X}\leq x)=\frac{A(1-\delta)}{|x|^{\alpha}},\quad x\leq-(2A)^{\frac{1}{\alpha}}.
\end{align}
Denoting the distribution functions of $\tilde{X}$ by $F_{\tilde{X}}$, one can derive that
\begin{align*}
F_{\xi}(x)-F_{\tilde{\xi}}(x)=&\big(\frac{1}{2}-\frac{A+B(x)}{|x|^{\alpha}}\big)(1+\delta){\bf 1}_{(0,(2A)^{\frac{1}{\alpha}})}(x)-\frac{B(x)}{|x|^{\alpha}}(1+\delta){\bf 1}_{((2A)^{\frac{1}{\alpha}},\infty)}(x)\\
&+\big(\frac{A+B(x)}{|x|^{\alpha}}-\frac{1}{2}\big)(1-\delta){\bf 1}_{(-(2A)^{\frac{1}{\alpha}},0)}(x)+\frac{B(x)}{|x|^{\alpha}}(1-\delta){\bf 1}_{(-\infty,-(2A)^{\frac{1}{\alpha}})}(x).
\end{align*}
Then, according to the proof of \cite[Lemma 3.3]{CNX21}, we have
\begin{align}\label{result}
&\Big|\mathbb{E}[Xf_{g_{M}}'(Y+aX)] -\mathbb{E}[X]\mathbb{E}[f_{g_{M}}'(Y)]-\frac{2A\alpha^{2}}{d_\alpha}\,a^{\alpha-1}\mathbb{E}\left[\big(\mathcal{A}^{\alpha,\delta}f_{g_{M}}\big)(Y)\right]\Big|\nonumber\\
\leq&\mathbb{E}\Big|\int_{-\infty}^{\infty}x\big[f_{g_{M}}'(Y+ax)-f_{g_{M}}'(Y)\big]
d\big(F_{X}(x)-F_{\tilde{X}}(x)\big)\Big|+|R|,
\end{align}
where
\begin{align*}
R=\mathbb{E}\Big[\int_{-(2A)^{\frac{1}{\alpha}}}^{(2A)^{\frac{1}{\alpha}}}u\big[f_{g_{M}}'(Y+au)-f_{g_{M}}'(Y)\big]\frac{A\alpha\big[(1+\delta){\bf 1}_{(0,\infty)}(u)+(1-\delta){\bf 1}_{(-\infty,0)}(u)\big]}
{|u|^{\alpha+1}}du\Big].
\end{align*}
It is easy to verify from Lemma \ref{nonregu}
\begin{align*}
|R|\leq& A\alpha\|f_{g_{M}}''\|_{\infty}a\int_{-(2A)^{\frac{1}{\alpha}}}^{(2A)^{\frac{1}{\alpha}}}\frac{(1+\delta){\bf 1}_{(0,\infty)}(u)+(1-\delta){\bf 1}_{(-\infty,0)}(u)}
{|u|^{\alpha-1}}du\\
=&\frac{2\alpha}{2-\alpha}(2A)^{\frac{2}{\alpha}}\|f_{g_{M}}''\|_{\infty}a
\leq\frac{2\alpha(2A)^{\frac{2}{\alpha}}\eta_{3,\alpha,\delta}}{(2-\alpha)M^{\frac{\alpha^{2}-1}{\alpha^{2}+2\alpha-1}}}a.
\end{align*}
Now, let us deal with the first term of (\ref{result}). When $0\leq\gamma\leq2-\alpha$. Choose a number $N>a^{-1}$ and $\theta>0$, which will be chosen later. One has by \cite[Lemma 2.8]{lihu} and using that $|B(x)|\leq L$ for $|x|>M^{\theta}N$,
\begin{align*}
&\mathbb{E}\Big|\int_{|x|>M^{\theta}N}x\big[f_{g_{M}}'(Y+ax)-f_{g_{M}}'(Y)\big]
d\big(F_{X}(x)-F_{\tilde{X}}(x)\big)\Big|\\
\leq&2\|f_{g_{M}}'\|_{\infty}\Big[\int_{|x|>M^{\theta}N}|x|dF_{X}(x)+\int_{|x|>M^{\theta}N}|x|dF_{\tilde{X}}(x)\Big]\\
=&2\|f_{g_{M}}'\|_{\infty}\mathbb{E}\big[|X|{\bf 1}_{(M^{\theta}N,\infty)}(|X|)+|\tilde{X}|{\bf 1}_{(M^{\theta}N,\infty)}(|\tilde{X}|)\big]\leq\frac{4(2A+K)\alpha^{2}}{\alpha-1}M^{\theta(1-\alpha)}N^{1-\alpha},
\end{align*}
where the last inequality is by \eqref{unregu}. On the other hand, by integrating by parts, \eqref{unregu} and Lemma \ref{nonregu},
\begin{align*}
&\mathbb{E}\Big|\int_{-M^{\theta}N}^{M^{\theta}N}x\big[f_{g_{M}}'(Y+ax)-f_{g_{M}}'(Y)\big]
d\big(F_{X}(x)-F_{\tilde{X}}(x)\big)\Big|\\
\leq&4L\|f_{g_{M}}'\|_{\infty}M^{\theta(1-\alpha)}N^{1-\alpha}+2\|f_{g_{M}}''\|_{\infty}a\int_{-M^{\theta}N}^{M^{\theta}N}\big|xF_{X}(x)-xF_{\tilde{X}}(x)\big|dx\\
\leq&4\alpha LM^{\theta(1-\alpha)}N^{1-\alpha}+\frac{2\eta_{3,\alpha,\delta}}{M^{\frac{2(\alpha-1)}{3\alpha-1}}}a\big[(2A)^{\frac{2}{\alpha}}+2\int_{(2A)^{\frac{1}{\alpha}}}^{M^{\theta}N}\frac{|B(x)|}{x^{\alpha-1}}dx
+2\int^{-(2A)^{\frac{1}{\alpha}}}_{-M^{\theta}N}\frac{|B(x)|}{|x|^{\alpha-1}}dx\big].
\end{align*}
If $0<\gamma\leq2-\alpha,$ we have
\begin{align*}
\int_{(2A)^{\frac{1}{\alpha}}}^{M^{\theta}N}\frac{|B(x)|}{x^{\alpha-1}}dx\leq\int_{(2A)^{\frac{1}{\alpha}}}^{M^{\theta}N}\frac{L}{x^{\alpha+\gamma-1}}dx\leq
\begin{cases}
L\log(M^{\theta}N),\quad &\gamma=2-\alpha,\\
\frac{L}{2-\alpha-\gamma}M^{\theta(2-\alpha-\gamma)}N^{2-\alpha-\gamma},\quad &\gamma\in(0,2-\alpha).
\end{cases}
\end{align*}
If $\gamma=0,$ we have
\begin{align*}
\int_{(2A)^{\frac{1}{\alpha}}}^{M^{\theta}N}\!\frac{|B(x)|}{x^{\alpha-1}}dx
=&\int_{(2A)^{\frac{1}{\alpha}}}^{a^{-1}}\!\frac{|B(x)|}{x^{\alpha-1}}dx
+\int_{a^{-1}}^{M^{\theta}N}\!\frac{|B(x)|}{x^{\alpha-1}}dx\\
\leq&\int_{0}^{a^{-1}}\!\frac{|B(x)|}{x^{\alpha-1}}dx+\frac{1}{2-\alpha}\sup_{x>a^{-1}}|B(x)|M^{\theta(2-\alpha)}N^{2-\alpha}.
\end{align*}
Since similar bounds hold true for $\int^{-(2A)^{\frac{1}{\alpha}}}_{-N}\frac{|B(x)|}{|x|^{\alpha-1}}dx,$ we can consider
\begin{align*}
N^{1-\alpha}=
\begin{cases}
aN^{2-\alpha-\gamma},\quad &\gamma\in(0,2-\alpha]\\
a\sup_{|x|>a^{-1}}|B(x)|N^{2-\alpha},\quad &\gamma=0,
\end{cases}
\end{align*}
and for any $\gamma\in[0,2-\alpha]$,
\begin{align*}
M^{\theta(1-\alpha)+\frac{2(\alpha-1)}{3\alpha-1}}=M^{\theta(2-\alpha-\gamma)},
\end{align*}
which implies
\begin{align*}
N=
\begin{cases}
a^{\frac{1}{\gamma-1}},\quad &\gamma\in(0,2-\alpha],\\
a^{-1}\big(\sup_{|x|>a^{-1}}|B(x)|)^{-1},\quad &\gamma=0.
\end{cases}
\end{align*}
and
\begin{align*}
\theta=\frac{2(\alpha-1)}{(3\alpha-1)(1-\gamma)}.
\end{align*}
The desired conclusion follows.
\end{proof}

\end{appendix}

\bibliographystyle{amsplain}

\end{document}